\newtheorem{theorem}{Theorem}[section]
\newtheorem{lemma}[theorem]{Lemma}
\newtheorem{conjecture}[theorem]{Conjecture}
\theoremstyle{definition}
\newtheorem{example}{Example}[section]
\newtheorem{definition}{Definition}[section]
\theoremstyle{remark}
\newtheorem{remark}{Remark}[section]
\renewcommand*\env@matrix[1][\arraystretch]{
  \edef\arraystretch{#1}
  \hskip -\arraycolsep
  \let\@ifnextchar\new@ifnextchar
  \array{*\c@MaxMatrixCols c}}
\begin{document}

\title{The Rank-Generating Functions of Upho Posets}
\author{Yibo Gao, Joshua Guo, Karthik Seetharaman, Ilaria Seidel}
\maketitle

\begin{center}
\begin{abstract}
    Upper homogeneous finite type (upho) posets are a large class of partially ordered sets with the property that the principal order filter at every vertex is isomorphic to the whole poset. Well-known examples include $k$-array trees, the grid graphs, and the Stern poset. Very little is known about upho posets in general. In this paper, we construct upho posets with Schur-positive Ehrenborg quasisymmetric functions, whose rank-generating functions have rational poles and zeros. We also categorize the rank-generating functions of all planar upho posets. Finally, we prove the existence of an upho poset with uncomputable rank-generating function.
\end{abstract} 
\end{center}\noindent 

\section{Introduction}

In this paper, we study a particularly natural symmetry condition on posets called \textit{upper homogeneous finite}, or \textit{upho}. This condition says that $P$ is isomorphic to every principal order filter of itself, which is defined as follows. 

\begin{definition}[\cite{Stanley-Sterns}]
A poset $P$ is \textit{upho} if it is of finite type and the principal order filter $V_{P,s} \cong P$ for all $s \in P$.\end{definition}

We will provide the necessary background to this definition in Section~\ref{sec: preliminaries}.

Upho posets, recently introducd by Richard Stanley \cite{Stanley-Sterns}, contain large families of infinite posets, including the square grid, $k$-array trees, Stern's poset and so on. Stanley focused on a particular upho poset known as Stern's poset. Named after its similarity to Stern's triangle, it gives rise to many interesting enumeration problems (see \cite{speyer2019proof} and \cite{yang2020stanley}). We provide more detailed examples \ref{example: chain}, \ref{example: 2-tree}, and \ref{example: cartesian coordinates} in Section~\ref{sec: preliminaries}.

One particularly interesting example of an upho poset arises in commutative algebra. If $R$ is a local principal ideal domain with a finite residue field, then the poset given by submodules of $R^d$, $d\in\mathbb{Z}_{>0}$, of finite colength ordered by reverse inclusion is a modular lattice and an upho poset. Here, the lattice operations, meet and join, are given by sum and intersection of modules. In fact, the converse is conjectured to be almost true. We record the following conjecture by Stanley in the literature, but we will not discuss it further in this paper.
\begin{conjecture}\label{conj:indecomposable-modular}
Every indecomposable upho modular lattice that contains a rank 3 interval that is complemented, is isomorphic to the poset given by submodules of $R^d$ of finite colength ordered by reverse inclusion for some local principal ideal domain $R$ with a finite residue field.
\end{conjecture}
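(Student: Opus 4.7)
The plan is to treat this as a coordinatization statement in the spirit of the classical theorems of Birkhoff, von Staudt, and Veblen–Young for complemented modular lattices, reinforced by the upho structure in order to recover not only a coordinatizing field but a full local PID. I do not expect a clean proof to fall out—this is a deep open problem—but here is how I would attempt it.

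First, I would globalize the rank 3 complemented hypothesis. Since $P \cong V_{P,s}$ for every $s$, the rank 3 complemented interval appearing somewhere in $P$ translates, under these isomorphisms, to rank 3 complemented intervals above every element; in particular, every principal filter contains projective-plane intervals. Combined with modularity and the upho "self-similarity," I would use this to build arbitrarily long complemented intervals by gluing rank 3 projective planes across successive levels, thereby raising the effective rank above the threshold ($\geq 4$) where classical coordinatization theorems apply.

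Second, I would apply the Birkhoff–Frink–Inaba coordinatization to produce a projective space structure, whose automorphism and incidence data yield a division ring. Finite type forces the coordinatizing division ring to be a finite field $\mathbb{F}_q$, since at each rank the set of points in any such projective geometry is finite. The indecomposability hypothesis prevents the lattice from splitting as a nontrivial product, so the same $\mathbb{F}_q$ and the same "dimension" $d$ appear uniformly throughout.

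Third, I would reconstruct $R$ as the inverse limit of the coordinatization data along a descending chain from the minimum element. The upho isomorphisms provide the compatibility maps between successive finite levels, and modularity together with complementation of the rank 3 intervals supplies the multiplicative structure. The result should be a complete DVR with residue field $\mathbb{F}_q$, i.e.\ a local PID, and the submodule lattice of $R^d$ would match $P$ rank by rank.

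The main obstacle, I expect, is precisely the gap between rank 3 and rank 4 in the classical theory: projective planes need not be Desarguesian, and complemented modular lattices of length exactly 3 coordinatize only up to a ternary ring. Ruling out non-Desarguesian coordinatizations using only upho and finite type, and then upgrading the resulting division ring to a commutative ring (and then to a PID), is where the real work lies. A secondary difficulty is ensuring the inverse system of local coordinatizations is coherent enough to produce a \emph{single} ring $R$ rather than a compatible family that might differ across the lattice; indecomposability is the natural hypothesis to invoke here, but converting it into a rigidity statement about the coordinatizing data is nontrivial.
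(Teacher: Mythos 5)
This statement is recorded in the paper as an open conjecture attributed to Stanley; the authors explicitly decline to discuss it further and offer no proof, so there is no argument of theirs to compare yours against. Your proposal is, as you acknowledge, a research plan rather than a proof, and it should be evaluated as such.

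Beyond the difficulties you already flag (non-Desarguesian planes at rank 3, upgrading a division ring to a commutative local PID, coherence of the inverse system), your first step contains a concrete error rather than merely a gap. You propose to use the upho self-similarity to ``build arbitrarily long complemented intervals by gluing rank 3 projective planes across successive levels.'' But the conjectured target lattices themselves refute this: if $R$ is a DVR with maximal ideal $(\pi)$ and residue field $\mathbb{F}_q$, an interval $[M,N]$ in the reverse-inclusion submodule lattice of $R^d$ is complemented precisely when $M/N$ is semisimple, i.e.\ $\pi M\subseteq N$, and such intervals have length at most $d$. So for $d=3$ the poset satisfies all hypotheses of the conjecture, contains complemented rank 3 intervals everywhere, yet contains no complemented interval of rank 4 or more. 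Any deduction of ``arbitrarily long complemented intervals'' from the hypotheses would therefore be false, and the classical coordinatization theorems for complemented modular lattices of length $\geq 4$ cannot be reached this way. The real problem --- and presumably why this remains open --- is to extract a coordinatizing ring from a modular lattice whose complemented pieces are confined to bounded-length semisimple layers, with the upho isomorphisms supplying the only global rigidity; your inverse-limit idea for assembling $R$ from those layers is the right intuition for what $R$ should be, but none of the three steps as written constitutes a proof of any part of the statement.
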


We study three types of upho posets in this paper. Firstly, because the Ehrenborg quasisymmetric functions associated with upho posets turn out to be symmetric (Lemma~\ref{lemma:ehrenborg is product of rgfs}), it is natural to ask about the existence of upho posets with Schur-positive Ehrenborg quasisymmetric functions. Thus, in Section~\ref{sec: schur-positivity}, we construct a large class of upho posets with Schur-positive Ehrenborg quasisymmetric function, particularly those with rational poles and zeros, in order to prove the following theorem.

\begin{theorem}\label{schur-positive 1-bx}
Given positive integers $a_1,a_2,\ldots,a_n$ and $b_1,b_2,\ldots,b_m$ with $n \geq 0$ and $m \geq 1$, there exists an upho poset $P$ with rank-generating function $$\frac{(1+a_1x)(1+a_2x)\cdots(1+a_nx)}{(1-b_1x)(1-b_2x)\cdots(1-b_mx)}.$$
\end{theorem}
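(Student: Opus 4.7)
The plan is to exhibit $P$ as a Cartesian product of simpler upho building blocks. One checks directly from the definition that if $P_1$ and $P_2$ are upho, then so is $P_1 \times P_2$, because the principal filter decomposes as $V_{P_1 \times P_2, (s_1,s_2)} = V_{P_1,s_1} \times V_{P_2,s_2} \cong P_1 \times P_2$, and the rank-generating function of a Cartesian product is the product of the rank-generating functions. The denominator factors are easy to realize: the infinite $b$-ary tree $T_b$ is upho with rank-generating function $\frac{1}{1-bx}$ for every $b \geq 1$.

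To handle the numerator, note that no upho poset can have rank-generating function equal to the bare polynomial $1+ax$: a finite upho poset must be a single point, since the principal filter at a maximal element is trivial. Hence each numerator factor must be bundled with at least one denominator factor, which is why the hypothesis $m \geq 1$ appears. The intermediate task is therefore to construct, for any positive integer $b$ and any positive integers $a_1,\ldots,a_n$, a single upho poset with rank-generating function $\frac{(1+a_1x)(1+a_2x)\cdots(1+a_n x)}{1-bx}$. Granted this, taking the Cartesian product of such a poset (with $b=b_1$) together with $T_{b_2},\ldots,T_{b_m}$ and applying the product rule immediately yields the theorem.

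The heart of the argument is thus the construction of this bundled building block, and that is where the main difficulty lies. A first attempt is to let elements be triples $(w,S,\sigma)$, where $w \in [b]^*$ is a vertex of the infinite $b$-ary tree, $S \subseteq [n]$ records the ``activated'' numerator factors, and $\sigma \in \prod_{i \in S}[a_i]$ records the color chosen for each activation, with rank $|w|+|S|$. Under the naive product order, this gives the correct rank-generating function $\frac{\prod_i(1+a_i x)}{1-bx}$, but the principal filter above $(w,S,\sigma)$ has rank-generating function $\frac{\prod_{i \notin S}(1+a_i x)}{1-bx}$, so the upho condition already fails whenever $S \ne \emptyset$. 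The main challenge is to enrich the order---or reparametrize the elements---so that every principal filter still sees the full collection of numerator factors. I expect this will require allowing the activation data to be ``refreshed'' as the tree coordinate $w$ advances deep enough, for instance by labeling decorations by the tree vertex at which they were introduced and identifying decorations across a carefully chosen equivalence. Verifying the upho condition for any such refinement is the technically involved step; the rank-generating function computation itself is routine once the order is in place.
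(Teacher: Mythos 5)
Your reduction is correct and is exactly the paper's: products of upho posets are upho with multiplicative rank-generating functions, $b$-ary trees realize $\frac{1}{1-bx}$, and everything therefore hinges on building a single upho poset with rank-generating function $\frac{\prod_{i=1}^n(1+a_ix)}{1-bx}$. But that building block is where the entire content of the theorem lives, and your proposal stops short of constructing it: you exhibit a natural attempt, correctly observe that it fails (the principal filter above $(w,S,\sigma)$ loses the factors indexed by $S$), and then only speculate that some ``refreshing'' of the activation data should work. That is a genuine gap, not a routine verification --- the refinement you are gesturing at is the hard step, and nothing in the proposal indicates how to carry it out or why the upho condition would survive.

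For comparison, the paper closes this gap in two stages. First (Lemma~\ref{schur-positive 1-x}, the case $b=1$) it replaces your one-shot activation set $S\subseteq[n]$ by coordinates $y_i$ living in $\{0,1,\dots,a_i\}$ viewed as $\mathbb{Z}/(a_i+1)\mathbb{Z}$, with the rank $k$ recorded as a separate parameter and covers $(y;k)\lessdot(z;k+1)$ whenever $y$ and $z$ differ in at most one coordinate; the isomorphism $V_p\cong P$ is then translation by $p$ modulo $a_i+1$ in each coordinate, which is precisely the ``refresh'' mechanism your attempt lacks --- a decoration is never permanently consumed because the coordinate group acts on itself by automorphisms. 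Second (Lemma~\ref{schur-positive 1-bx mini}) it bootstraps from $1-x$ to $1-bx$ by a recursive grafting construction, attaching $b-1$ fresh copies of the poset being built at each rank and verifying the upho condition by induction on rank. Neither step is implied by the framework you set up, so as written the proposal establishes only the (easy) reduction, not the theorem.
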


Then, in Section~\ref{sec: planar}, we precisely categorize the rank-generating functions of all upho posets with a planar Hasse diagram. 

\begin{theorem}
The rank-generating function of any planar upho poset $P$ with up-degree $b$ is of the form $Q(x)^{-1}$ where $Q(x)=1-bx+a_2x^2+a_3x^3+ \cdots +a_nx^n$ such that $b, a_1, a_2, \ldots, a_n \in \mathbb{Z}_{\geq{0}}$ and $Q(1)\leq 0$. Furthermore, any such function $Q^{-1}(X)$ is realized by some planar upho poset.
\end{theorem}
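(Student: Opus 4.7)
The plan is to handle the two directions separately: necessity (any planar upho $P$ has RGF of the stated form) and sufficiency (every such $1/Q$ is realized by some planar upho).

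For necessity, I would fix a planar Hasse embedding of $P$ with $\hat{0}$ at the bottom and list its covers as $c_1,\ldots,c_b$ in left-to-right order. By the upho property, each principal order filter $V_{P,c_i}$ is isomorphic to $P$ and hence contributes $xF(x)$ to the rank-generating function of $P\setminus\{\hat{0}\}$, where $F=F_P$. The key planar observation is a Jordan-curve-type claim: any element lying above both $c_p$ and $c_q$ for $p<q$ must lie above every intermediate $c_i$, so the intersection $\bigcap_{i\in I}V_{P,c_i}$ depends only on $\mathrm{conv}(I)\subseteq[b]$. Plugging this into the inclusion-exclusion
$$F(x)-1=\sum_{\emptyset\ne I\subseteq[b]}(-1)^{|I|+1}F_{\bigcap_{i\in I}V_{P,c_i}}(x)$$
causes the contributions from subsets of any given convex hull $[p,q]$ to telescope: the signed sum is $1$ if $p=q$, $-1$ if $q=p+1$, and $0$ whenever $q-p\ge 2$. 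Hence only singletons and consecutive pairs survive, yielding
$$F(x)-1=b\cdot xF(x)-\sum_{p=1}^{b-1}F_{W_p}(x),\qquad W_p:=V_{P,c_p}\cap V_{P,c_{p+1}}.$$
A further planarity/upho argument shows that each non-empty $W_p$ has a unique minimum $d_p$ at some rank $r_p\ge 2$, so $W_p=V_{P,d_p}\cong P$ and $F_{W_p}(x)=x^{r_p}F(x)$. Rearranging gives $F(x)Q(x)=1$ with
$$Q(x)=1-bx+\sum_{p:\,W_p\ne\emptyset}x^{r_p},$$
a polynomial of the advertised form, where $a_k$ is the number of consecutive pairs $(c_p,c_{p+1})$ whose meet sits at rank exactly $k$. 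Finally $Q(1)\le 0$ follows from the infiniteness of $P$: otherwise $F(1)=1/Q(1)<\infty$ contradicts $\sum_n f_n=\infty$.

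For sufficiency, given $b\ge 1$ and $a_2,\ldots,a_n\in\mathbb{Z}_{\ge 0}$ with $Q(1)\le 0$ (equivalently $\sum_k a_k\le b-1$), I would construct a planar upho realizing $1/Q$ by prescribing the pattern of meets among consecutive covers and iterating by upho self-similarity. Concretely, assign to each of the $b-1$ consecutive pairs of covers a target meet rank from the multiset containing $a_k$ copies of $k$ (padded with ``no meet'' entries if $\sum_k a_k<b-1$); build a finite motif on top of $\hat{0}$ realizing those meets by gluing in chains of the appropriate lengths terminating in a common element; then at every newly created element, graft a fresh copy of the motif above it using the same assignment. Planarity is preserved by placing each copy in the planar region above its base element, and the resulting RGF satisfies $FQ=1$ by the same inclusion-exclusion as in necessity.

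The main obstacle is the planarity/uniqueness step that each non-empty $W_p$ is a principal filter (equivalently, has a unique minimum common upper bound); this rules out a ``$K_{2,2}$-type'' configuration incompatible with a rank-respecting planar embedding and then propagates via upho self-similarity. A secondary subtlety in the sufficiency direction is verifying that the iterative gluing creates no unintended meets among non-consecutive or higher-rank elements that would perturb the polynomial $Q$.
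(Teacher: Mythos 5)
Your necessity argument is a genuinely different route from the paper's. The paper classifies the vertices of down-degree $2$ into \emph{bifurcated} and \emph{root-bifurcated} vertices (after showing no vertex covers more than two others), bounds the number of root-bifurcated vertices by $b-1$ via an adjacency-of-atoms argument, and then derives the recurrence $r_i=br_{i-1}-\sum_k a_kr_{i-k}$ rank by rank using upho self-similarity. You instead run inclusion--exclusion over the principal filters of the $b$ atoms, use a Jordan-curve convexity claim to collapse all intersections to those indexed by intervals, and observe that the alternating sum over subsets with a fixed convex hull $[p,q]$ vanishes unless $q-p\le 1$. This is cleaner in two respects: the bound $\sum_k a_k\le b-1$ (equivalently $Q(1)\le 0$) falls out for free because there are only $b-1$ consecutive pairs of atoms, and you never need to classify down-degrees. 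Your identification of $a_k$ as the number of adjacent atom pairs whose least common upper bound sits at rank $k$ matches the paper's count of root-bifurcated vertices at rank $k$ (note it is the \emph{join}, not the meet, of $c_p$ and $c_{p+1}$ that sits at rank $r_p$; their meet is $\hat0$). The two planarity inputs you defer are real but tractable: the unique minimum of $W_p$ follows immediately from the meet-semilattice property of planar upho posets (if $d,d'$ are two minimal common upper bounds of $c_p,c_{p+1}$, then $d\wedge d'$ is again a common upper bound below both, forcing $d=d'$), which the paper proves by a case analysis essentially due to Kelly--Rival; the convexity claim is a bounded-region crossing argument of the same flavor as the paper's Lemma on adjacent atoms. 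You should supply both, since they carry the actual planarity content.

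Your sufficiency sketch is essentially the paper's construction, and it is under-specified at exactly the point you flag. The copies of the ``motif'' grafted above different elements cannot be disjoint: the least upper bound $d_p$ of $c_p$ and $c_{p+1}$ lies in both $V_{c_p}$ and $V_{c_{p+1}}$, so the motifs above adjacent atoms must share vertices, and this sharing cascades. The paper resolves this by building rank by rank: place $b$ fresh covers above every vertex of rank $i-1$, then merge adjacent pairs as dictated \emph{simultaneously} by every subposet $V_v$ with $v$ of rank $j<i$ (each such $V_v$ must acquire its prescribed root-bifurcated vertices at its rank $i-j$), and finally create the $a_i$ new root-bifurcated vertices of $P$ itself. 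Upho-ness is then an induction on the observation that the merging performed inside $V_w$ replicates the construction of $P$ shifted by $\rho(w)$ ranks. To complete your version you would need to make the merging rule explicit enough to check both that no unintended identifications occur (so the recurrence for $r_i$ is exactly the prescribed one) and that the result is planar, which the paper handles by always merging only \emph{adjacent} vertices in the embedding.
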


Finally, after focusing on upho posets with rational rank-generating function, we prove the existence of an upho poset with uncomputable rank-generating function in Section~\ref{sec: irrationality}. To do so, we define a new class of upho posets constructed from monoids with a finite alphabet, homogeneous relations and certain restrictions. This allows us to simplify our upho conditions and more easily consider the rank-generating functions. 

\begin{theorem} \label{thm:intro-irrationality}
There exists an upho poset with uncomputable rank-generating function.
\end{theorem}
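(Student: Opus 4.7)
The plan is to construct the desired poset from a finitely presented monoid, following the strategy flagged in the section introduction. First I would set up the monoid-to-poset correspondence: given a monoid $M = \langle \Sigma \mid R \rangle$ with $\Sigma$ a finite alphabet and $R$ a set of \emph{homogeneous} relations (both sides of every relation have equal length), I associate the poset $P_M$ on the underlying set of $M$ with $m \le m'$ iff $m' = mw$ for some $w \in M$. Because every relation preserves length, word length descends to a well-defined rank on $M$, so $P_M$ is graded. The ``certain restrictions'' of the construction would be chosen so that $M$ is left cancellative and no generator equals the identity; these ensure that, for every $m \in M$, the map $w \mapsto mw$ is an order-preserving bijection from $P_M$ onto $V_{P_M,m}$ that also reflects the order. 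Hence $P_M$ is upho with rank-generating function $\sum_{n \ge 0} f_n(M)\, x^n$, where $f_n(M)$ is the number of distinct elements of $M$ representable by length-$n$ words.

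Next I would observe that, for monoids of this form, the growth sequence $(f_n(M))$ is computable if and only if the word problem of $M$ is decidable. The ``decidable $\Rightarrow$ computable'' direction is immediate: list all $|\Sigma|^n$ words and group them using the decision procedure. Conversely, the word-equivalence relation on same-length words is always recursively enumerable (enumerate formal proofs built from $R$), so knowing $f_n$ in advance lets one halt this enumeration at the moment exactly $f_n$ equivalence classes on length-$n$ words have been witnessed, at which point the full equivalence (and hence the word problem at length $n$) is known. Consequently, producing an upho poset with uncomputable rank-generating function reduces to exhibiting a finitely presented monoid satisfying the restrictions above whose word problem is undecidable.

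To finish, I would adapt a classical Markov--Post style construction of a finitely presented monoid with undecidable word problem. The usual constructions either already use length-preserving rewriting rules or can be homogenized by padding with an inert symbol, which handles homogeneity. The principal obstacle is making the adaptation compatible with left cancellativity, since the standard Markov--Post monoids typically are not cancellative. My approach would be to embed the undecidable presentation into a larger cancellative monoid---for example, via a free product or a skew product with a sufficiently large free monoid---and then verify that the undecidable identifications survive, and that no new spurious identifications arise that could be exploited by an algorithm. Once such an $M$ is constructed, the reductions from the first two steps immediately convert it into an upho poset whose rank-generating function is uncomputable. The technical heart of the argument, and the step I expect to require the most care, is this cancellativity-preserving encoding of the halting problem: homogeneity and undecidability are classical, but marrying them to left cancellativity (and whatever other side conditions the upho construction forces) is the genuinely new work.
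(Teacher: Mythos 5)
Your first two reduction steps are fine and match the paper's framework: the paper also builds posets from monoids with homogeneous relations and proves (in an unnumbered lemma) that left cancellativity, i.e.\ $AX=AY\Rightarrow X=Y$, suffices for the upho property. The fatal problem is the target of your third step. A \emph{finitely presented} monoid all of whose defining relations are homogeneous always has \emph{decidable} word problem: since every relation preserves length, the congruence class of a word $w$ is contained in $\Sigma^{|w|}$, which is finite, and it can be computed exactly by closing $\{w\}$ under single applications of the finitely many rules until the (necessarily terminating) fixed point is reached. Consequently $f_n$ is computable for every finite homogeneous presentation, and no amount of cleverness about cancellativity or padding can produce the monoid you need. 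The padding trick is precisely where undecidability is lost: once the rules are length-preserving and finite in number, reachability is a finite search. So the object on which your whole plan rests does not exist, and the proposal as written cannot be completed.

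The paper avoids this by abandoning finite presentation and abandoning constructivity. It fixes one explicit \emph{infinite} family $S=\{t_2,t_3,\dots\}$ of homogeneous relations on the alphabet $\{L,R\}$, shows that \emph{every} subset of $S$ yields an upho poset (Lemma~\ref{irrational-upho}, a careful confluence-style analysis establishing left cancellativity), and shows that no relation $t_n$ is implied by the others, so distinct subsets give distinct rank-generating functions (Lemma~\ref{irrationality distinct rgf}). Since $\mathcal{P}(S)$ is uncountable while only countably many power series are computable, some subset gives an uncomputable rank-generating function. If you want to keep the computability-theoretic flavor of your argument, the natural repair is to take the single subset $\{t_n : n\in H\}$ for $H$ the halting set: by the independence of the relations, the coefficient of $x^{2n+2}$ detects membership of $n$ in $H$, so that one poset has uncomputable rank-generating function. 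But note that this still requires the two lemmas above, i.e.\ essentially all of the technical work of the paper's Section~\ref{sec: irrationality}, none of which is supplied by your sketch.
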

\section{Preliminaries}\label{sec: preliminaries}

We refer the readers to \cite{ec1} for a detailed exposition. In a poset $P$, we say that some element $y \in P$ \textit{covers} $x \in P$, denoted $x \lessdot y$ if $x < y$ and there does not exist $z \in P$ with $x < z < y$. We can then define the \textit{up-degree} of $x$ to be the number of elements that cover $x$, and the \textit{down-degree} of $x$ to be the number of elements covered by $x$. Additionally, let $\hat{0}$ be the unique minimal element of $P$ if such an element exists, and we call $x \in P$ an \textit{atom} if $\hat{0} \lessdot x$.

We call $P$ \textit{ranked} if there exists a disjoint union partition of $P$, $P_0 \sqcup P_1 \sqcup P_2 \sqcup \ldots$, called the rank decomposition of $P$, with the property that, if $x \in P_i$ and $x \lessdot y$, then $y \in P_{i+1}$. We say $P_i$ is the $i$-th rank of $P$. If $x \in P_i$, then we write $\rho(x) = i$. If $P$ is \textit{ranked}, we define the \textit{rank-generating function} of $P$ to be $$F_P(x)=\sum_{k=0}^{\infty}|P_k|x^k.$$ Additionally, a ranked poset $P$ is of \textit{finite type} if $|P_i|$ is finite for all nonnegative integers $i$. 

Recall that an \textit{isomorphism} on two posets $P$ and $Q$ is a bijection $\iota: P \rightarrow Q$ such that, for elements $p_1, p_2 \in P$ and $q_1, q_2 \in Q$, if $p_1 > p_2$ then $\iota(p_1) > \iota(p_2)$ and if $q_1 > q_2$ then $\iota^{-1}(q_1) > \iota^{-1}(q_2)$. We also define $V_{P,s} :=\{t\in P\:|\:t\geq s\}$, the \textit{principal order filter} above $s \in P$. When the poset referred to is unambiguous, we may abbreviate $V_{P,s}$ to $V_s$. In Figure~\ref{fig: preliminaries}, the subposets shown in dashed lines are each an principal order filter above the highlighted vertex. 

The following definition is due to Richard Stanley \cite{Stanley-Sterns}, although it is not found in the literature.
\begin{definition}[\cite{Stanley-Sterns}]\label{def: upho}
A ranked poset $P$ is \textit{upper homogeneous finite} (or \textit{upho}) if it is of finite type and has the property that the principal order filter $V_{P,s} \cong P$ for all $s \in P$.

\end{definition}
We provide some examples of upho posets. 

\begin{example}\label{example: chain}
Consider the poset defined by $\mathbb{N}$ with the standard ordering $<$. Then for any $s \in \mathbb{N}$, $\iota: V_s \rightarrow \mathbb{N}$ takes an element $x>s$ to $x-s$. Note that this is bijective and preserves ordering: if $x>y>s$, then $x-s>y-s$, and if $x>y$, then $x+s>y+s$, as desired.
\end{example}

\begin{example}\label{example: 2-tree}
A full binary tree $P$ is a simple example of an upho poset. For any $s$ on rank $i$ of $P$, we describe the isomorphism $\iota: V_s \rightarrow P$ as follows. Let the Hasse Diagram of the poset be as shown in Figure~\ref{fig: preliminaries}. Assign each element on rank $k$ a $k$-element binary string, such that the $k$-th element from the left is represented by $k-1$ in binary. Then, we can define $\iota: V_s \rightarrow P$ by taking every element of $V_s$ and deleting the prefix $s$. 
\end{example}

\begin{example}\label{example: cartesian coordinates}
Consider a poset $P = \mathbb{N} \times \mathbb{N}$ consisting of all $2$-dimensional Cartesian lattice points with nonnegative coordinates (see Figure~\ref{fig: preliminaries}). Then let $(x_1, y_1) \geq (x_2, y_2)$ if and only if $x_1 \geq x_2$, $y_1 \geq y_2$. For any point $s = (s_x, s_y)$, we have an isomorphism $\iota: V_s \rightarrow P$ taking any element $(x , y) \geq (s_x, s_y)$ to $(x - s_x, y - s_y)$. Thus, the poset is upho.
\end{example}

The Stern Poset, introduced by Stanley in \cite{Stanley-Sterns}, as well as the nicknamed ``Bowtie" poset, are also examples of upho posets. Diagrams of these two posets are also included in Figure~\ref{fig: preliminaries}.

\begin{figure}
 \centering
 \includegraphics[width = 450pt]{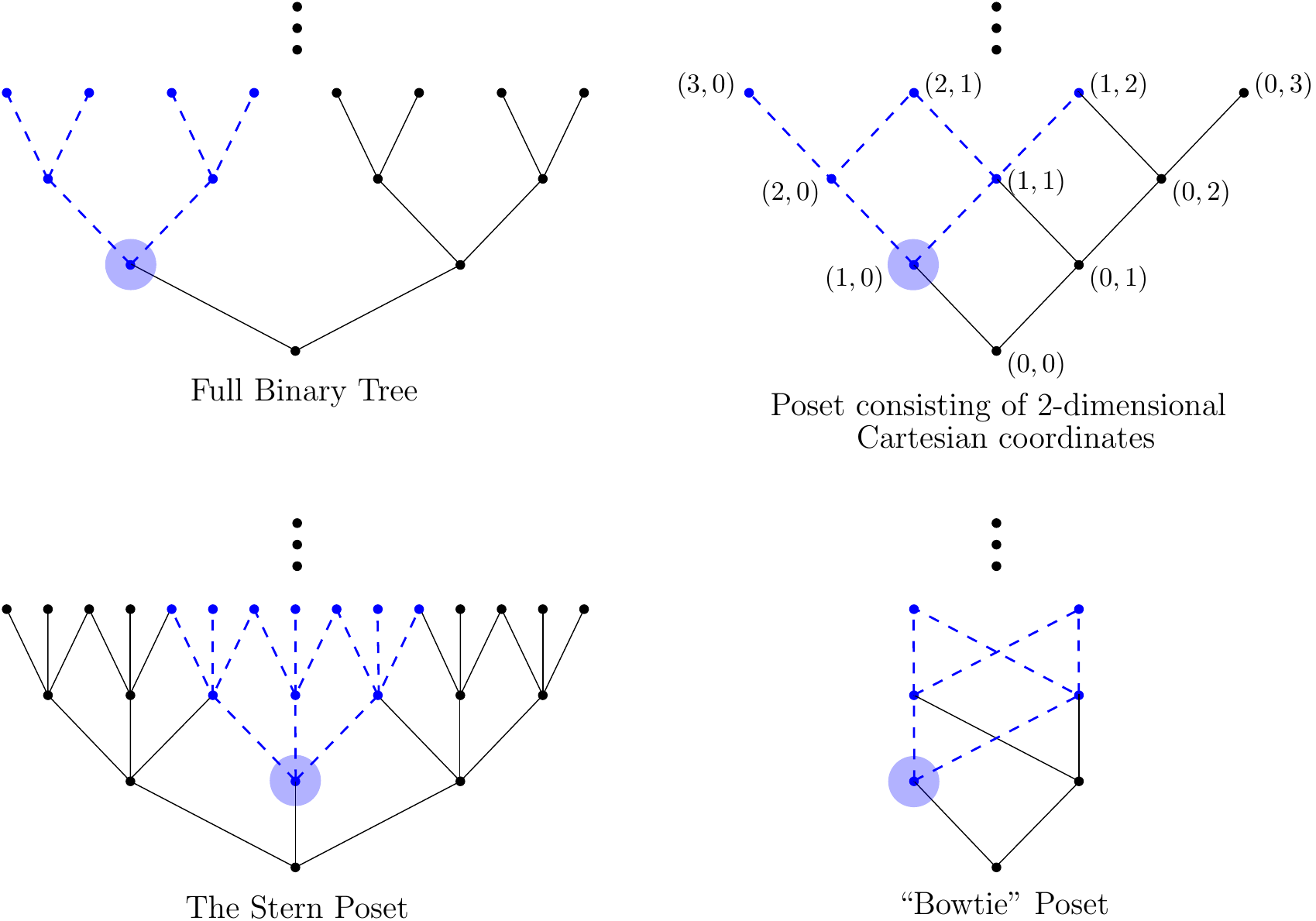}
 \caption{Examples of upho posets. Cover relations in the poset consisting of $2$-dimensional Cartesian coordinates are defined by $(x_1,y_1) \geq (x_2,y_2)$ if and only if the points are distinct and $x_1 \geq x_2$, $y_1 \geq y_2$.}
 \label{fig: preliminaries}
\end{figure}

We establish some useful lemmas regarding upho posets.

\begin{lemma}\label{lem: upho-posets-infinite}
A nonempty upho poset $P$ either consists of a single element or infinitely many elements. Furthermore, any upho poset has a unique minimal element $\hat{0}$. 
\end{lemma}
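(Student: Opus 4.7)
My plan is to handle the two claims in sequence, establishing the unique minimum first and then using it to bootstrap an infinite chain whenever $|P|\geq 2$.

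For the minimum, I would fix any element $s\in P$ and consider a poset isomorphism $\iota\colon P\to V_{P,s}$ provided by the upho property. By construction $V_{P,s}=\{t\in P : t\geq s\}$ has $s$ itself as a minimum element. Since order isomorphisms send minima to minima, the preimage $\iota^{-1}(s)$ is a minimum of $P$; call it $\hat{0}$. Uniqueness is automatic, since the minimum of any poset (when it exists) is unique.

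For the dichotomy, suppose $P$ has at least two elements. Then there is some $s_1\in P$ with $s_1>\hat{0}$. Applying the upho property at $s_1$ gives $V_{P,s_1}\cong P$; since this filter is isomorphic to $P$ it also has at least two elements, and its minimum is $s_1$, so there is some $s_2\in V_{P,s_1}$ with $s_2>s_1$. Iterating this construction---at step $k$, using the upho isomorphism at $s_k$ to produce $s_{k+1}>s_k$ inside $V_{P,s_k}\subseteq P$---yields an infinite strictly ascending chain $\hat{0}<s_1<s_2<\cdots$ of pairwise distinct elements, so $P$ is infinite.

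I do not expect a real obstacle. The only two points that require any care are the fact that an order isomorphism preserves minima (immediate from the defining preservation of $\leq$) and the verification that the iterative construction produces genuinely new elements at each stage: strictness of the inequalities ensures the $s_k$ are pairwise distinct, while each $s_{k+1}\in V_{P,s_k}\subseteq P$ keeps the chain inside the original poset.
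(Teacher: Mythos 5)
Your proof is correct. The first half is essentially the paper's argument: the paper picks a minimal element $a$ and transports uniqueness through $V_{P,a}\cong P$, while you pull the minimum $s$ of $V_{P,s}$ back through the isomorphism for an arbitrary $s$ --- a minor improvement, since you never need to first justify that a minimal element exists at all. For the infinitude claim the two proofs genuinely diverge. The paper observes that for any non-minimal $b$ the filter $V_{P,b}$ is a \emph{proper} subset of $P$ in bijection with $P$, so $P$ is Dedekind-infinite and hence infinite; this needs only one application of the upho property. You instead iterate the upho isomorphism to manufacture an infinite strictly increasing chain $\hat{0}<s_1<s_2<\cdots$, which costs a recursion but yields slightly more: a non-trivial upho poset contains an infinite chain (hence elements of arbitrarily large rank), not merely infinitely many elements. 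Both routes are complete and elementary.
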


\begin{proof}

If $P$ is nonempty, then it has a minimal element $a$. Since $V_a \cong P$ and $V_a$ has exactly one minimal element, $P$ must also have exactly one minimal element. If $P$ has at least two elements, then it contains at least one non-minimal element $b$. Then $V_b$ is a strict subset of $P$ in bijection with $P$, so $P$ is infinite.
\end{proof}

We focus on infinite upho posets for the remainder of this paper. We define and provide lemmas regarding the following function associated with posets.

\begin{definition}[\cite{ehrenborg1996posets}]\label{def:ehrenborg-quasisym}
For any ranked poset $P$ of finite type with a minimum $\hat0$, define its \textit{Ehrenborg quasisymmetric function} of degree $n$ to be 
$$E_{P,n}(x_1,x_2,\ldots):=\sum_{\substack{\hat0=t_0\leq t_1\leq\cdots\leq t_{k-1}<t_k\\\rho(t_k)=n}}x_1^{\rho(t_0,t_1)}x_2^{\rho(t_1,t_2)}\cdots x_k^{\rho(t_{k-1},t_k)}$$
where $\rho(t_i,t_{i+1})=\rho(t_{i+1})-\rho(t_i)$. We also write $E_P:=\sum_{n\geq0} E_{P,n}$.
\end{definition}

First, we have the following lemma on the multiplicativity of upho posets and their Ehrenborg quasisymmetric functions.

\begin{lemma}\label{lem:multiplicative}
Let $P$ and $Q$ be upho posets. Then, $P \times Q$ is an upho poset. Furthermore, $F_{P \times Q} = F_P F_Q$ and $E_{P \times Q} = E_PE_Q$. 
\end{lemma}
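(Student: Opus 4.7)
The lemma has three parts; the upho property and the factorization $F_{P\times Q}=F_PF_Q$ are essentially bookkeeping, while the Ehrenborg identity requires a short bijective argument. First I would verify that $P\times Q$ is ranked with $\rho(p,q)=\rho_P(p)+\rho_Q(q)$; then $(P\times Q)_n=\bigsqcup_{i+j=n}P_i\times Q_j$ is finite and
$$F_{P\times Q}(x)=\sum_{n\ge0}\Big(\sum_{i+j=n}|P_i|\,|Q_j|\Big)x^n=F_P(x)F_Q(x).$$
For the upho property, the principal order filter of the product factors as $V_{P\times Q,(p,q)}=V_{P,p}\times V_{Q,q}$, so combining the upho isomorphisms $V_{P,p}\cong P$ and $V_{Q,q}\cong Q$ coordinate-wise yields $V_{P\times Q,(p,q)}\cong P\times Q$.

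For $E_{P\times Q}=E_PE_Q$, the plan is to establish a monomial-preserving bijection between the multichains of $P\times Q$ indexing $E_{P\times Q}$ and ordered pairs $(C_P,C_Q)$ of such multichains in $P$ and $Q$ respectively, with the convention that the degenerate ``empty chain'' $\hat0$ contributes the monomial $1$ on either side. Given a multichain $\hat0=(p_0,q_0)\le\cdots\le(p_{k-1},q_{k-1})<(p_k,q_k)$ of length $k$ in $P\times Q$, I would project onto each coordinate and then \emph{trim} each weak chain by removing trailing repetitions, producing chains $C_P$ and $C_Q$ of lengths $k_P$ and $k_Q$ with strict final step (or empty if the entire projection is constant). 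The strictness of the last step in $P\times Q$ forces $\max(k_P,k_Q)=k$, and the inverse map pads the shorter trimmed chain with its final element up to length $\max(k_P,k_Q)$ and pairs coordinate-wise.

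The monomial attached to the $P\times Q$ chain is $\prod_{i=1}^k x_i^{r_i}$ with $r_i=\rho(t_i)-\rho(t_{i-1})$, and since ranks split additively in the product as $r_i=a_i+b_i$, this monomial factors exactly as $\mathrm{mon}(C_P)\cdot\mathrm{mon}(C_Q)$, reading any variable beyond a trimmed chain's length as contributing $x_i^0=1$. Summing and splitting according to the bijection then gives $E_{P\times Q}=\sum_{(C_P,C_Q)}\mathrm{mon}(C_P)\,\mathrm{mon}(C_Q)=E_P\cdot E_Q$. The main subtlety is the edge-case bookkeeping of the bijection --- checking that projection-then-trim is exactly inverse to pad-and-pair, and that the empty-chain convention on each factor is precisely what is needed so that every multichain in $P\times Q$ (in particular those whose projection to one factor is trivial) is counted exactly once.
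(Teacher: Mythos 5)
Your proposal is correct, and for the parts concerning the upho property and $F_{P\times Q}=F_PF_Q$ it coincides with the paper's proof (factor the principal order filter coordinatewise; Cauchy product on the rank decomposition $(P\times Q)_n=\bigsqcup_{i+j=n}P_i\times Q_j$). For the identity $E_{P\times Q}=E_PE_Q$, however, you take a genuinely different route. The paper dispatches this part in one line by invoking Lemma~\ref{lemma:ehrenborg is product of rgfs}, which says that for an upho poset $E_P(x_1,x_2,\ldots)=\prod_i F_P(x_i)$; given that, $E_{P\times Q}=\prod_i F_{P\times Q}(x_i)=\prod_i F_P(x_i)F_Q(x_i)=E_PE_Q$ follows from the already-proved factorization of $F$. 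Your project-and-trim versus pad-and-pair bijection on multichains instead proves the Ehrenborg identity directly from Definition~\ref{def:ehrenborg-quasisym}, and I checked the bookkeeping you flag as the main subtlety: the strict last step of the product chain forces $\max(k_P,k_Q)=k$, trimming only trailing repetitions (internal repeats are allowed in the weak chains indexing $E_P$), padding by the final element is exactly undone by trimming, the empty-chain convention handles a constant projection, and the exponents split additively since $\rho_{P\times Q}=\rho_P+\rho_Q$. What your approach buys is generality and independence: the bijection never uses the upho hypothesis, so it establishes multiplicativity of the Ehrenborg function for arbitrary ranked posets of finite type with a $\hat0$ (this is essentially Ehrenborg's original multiplicativity result), and it removes the paper's forward reference to Lemma~\ref{lemma:ehrenborg is product of rgfs}. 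What the paper's route buys is brevity, at the cost of restricting the $E$-identity to the upho setting where $E_P$ is already known to be the symmetric function $\prod_iF_P(x_i)$.
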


\begin{proof}
Since $P$ and $Q$ are of finite type, $P \times Q$ is obviously of finite type as well. Now, take elements $p \in P$ and $q \in Q$. Then, $V_{P \times Q, (p,q)} \equiv V_{P,p} \times V_{Q,q} \equiv P \times Q$, implying $P \times Q$ is upho, as desired. 

To show $F_{P \times Q} = F_PF_Q$, note that $(P\times Q)_k = \cup_{t = 0}^k (P_t\times Q_{k-t})$, so that $|(P\times Q)_k| = \sum_{t = 0}^k |P_t||Q_{k-t}|$. This implies that
\begin{align*}
    F_{P\times Q} &= \sum_{k=0}^{\infty} \left(\sum_{t=0}^k |P_t||Q_{k-t}|\right)x^k \\
    &= \left(\sum_{k_1=0}^{\infty} |P_{k_1}|x^{k_1}\right)\left(\sum_{k_2=0}^{\infty} |Q_{k_1}|x^{k_1}\right) \\
    &= F_P\cdot F_Q
\end{align*}
as desired. 

By lemma \ref{lemma:ehrenborg is product of rgfs}, showing $F_{P \times Q} = F_PF_Q$ suffices to show $E_{P \times Q} = E_PE_Q$, and we are done.
\end{proof}

\begin{example}
Let $P$ and $Q$ contain the elements of $\mathbb{N}$ with the standard binary operation $>$. As shown in Example~\ref{example: chain}, both $P$ and $Q$ are upho. Then $P \times Q$ yields an upho poset consisting of $2$-dimensional Cartesian coordinates, described in Example~\ref{example: cartesian coordinates}.
\end{example}

We also have the following important lemma relating $E_P$ and $F_P$. 

\begin{lemma}\label{lemma:ehrenborg is product of rgfs}
Let $P$ be upho. Then $E_P$ is a symmetric function. Moreover, $$E_P(x_1,x_2,\ldots)=F_P(x_1)F_P(x_2)\cdots.$$
\end{lemma}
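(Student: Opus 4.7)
The plan is to compute the coefficient of each monomial in $E_P$ directly and recognize it as the product $\prod_{i\geq 1} F_P(x_i)$. The key reparametrization is to describe each multichain $\hat{0}=t_0\leq t_1\leq\cdots\leq t_{k-1}<t_k$ by its underlying strict chain of distinct values $\hat{0}=u_0<u_1<\cdots<u_r$ together with the positions $1\leq i_1<i_2<\cdots<i_r=k$ at which the strict jumps occur; this correspondence is a bijection, with the trivial case $r=0$ corresponding to the chain $t_0=\hat{0}$ and contributing the constant term $1$.

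Under this bijection, the defining sum rearranges as
$$E_P\;=\;\sum_{r\geq 0}\;\sum_{\hat{0}=u_0<u_1<\cdots<u_r}\;\sum_{1\leq i_1<i_2<\cdots<i_r}\;\prod_{j=1}^{r} x_{i_j}^{\rho(u_{j-1},u_j)}.$$
To evaluate the middle sum, I would group strict chains by their rank-difference vector $(a_1,\ldots,a_r)$ with $a_j=\rho(u_j)-\rho(u_{j-1})\geq 1$. Because $P$ is upho, once $u_{j-1}$ is fixed, $V_{u_{j-1}}\cong P$, so there are exactly $|P_{a_j}|$ choices of $u_j>u_{j-1}$ achieving the prescribed rank jump. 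Iterating over $j=1,\ldots,r$ yields $\prod_{j=1}^{r}|P_{a_j}|$ strict chains per rank-difference vector, and hence
$$\sum_{\hat{0}=u_0<\cdots<u_r}\;\prod_{j=1}^{r} x_{i_j}^{\rho(u_{j-1},u_j)}\;=\;\prod_{j=1}^{r}\bigl(F_P(x_{i_j})-1\bigr).$$

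Writing $y_i=F_P(x_i)-1$, the outer sum collapses via the standard formal power series identity
$$\sum_{r\geq 0}\sum_{1\leq i_1<\cdots<i_r}\prod_{j=1}^{r} y_{i_j}\;=\;\prod_{i\geq 1}(1+y_i)\;=\;\prod_{i\geq 1} F_P(x_i),$$
yielding the desired factorization. Symmetry of $E_P$ is then immediate, since each factor depends on only a single variable and the infinite product is manifestly invariant under permutations of the $x_i$. The main obstacle I expect to be purely notational: carefully matching the multichain convention (especially the handling of the trivial case and the strict final inequality $t_{k-1}<t_k$) with the factored form, so that no constant term is lost or double-counted, and verifying that the upho isomorphisms composed along the chain respect the rank function in the way required to apply $|P_{a_j}|$ at each step.
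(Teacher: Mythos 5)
Your proof is correct and follows essentially the same route as the paper's: both arguments reduce to counting chains with prescribed rank jumps by iterating the upho isomorphism $V_{u_{j-1}}\cong P$, which yields $\prod_j |P_{a_j}|$ chains per jump vector, and then matching coefficients against $\prod_i F_P(x_i)$. Your reorganization via strict chains plus jump positions, together with the identity $\sum_{r\geq 0}\sum_{i_1<\cdots<i_r}\prod_j y_{i_j}=\prod_i(1+y_i)$, is a slightly cleaner way of handling the zero-exponent positions than the paper's direct sum over weak compositions, but it is the same underlying argument.
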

\begin{proof}

It suffices to prove that $E_{P,n}(x_1,x_2,\ldots)$ is the coefficient of $x^n$ in $\prod_{i=1}^{\infty} F_P(x_i)$. First, note that the coefficient of $x^n$ in $\prod_{i=1}^{\infty} F_P(x_i)$ is $\sum_{S} \prod{|r_{\alpha_i}|}$, where $S$ is the set of all sequences $(\alpha_1,\alpha_2,\ldots)$ with $\sum_{i=1}^{\infty} \alpha_i = n$. (Here, $|r_{\alpha_i}|$ is the cardinality of rank $\alpha_i$ in $P$). 

Take a chain $\hat{0}=t_0 \leq t_1 \leq \cdots \leq t_{k-1} < t_k, \rho(t_k)=n$. Then, let $\alpha_i = \rho(t_i, t_{i-1})$ for all $1 \leq i \leq k$ and $\alpha_i=0$ for $i > k$. We can do the same in reverse, which yields a bijection between all sequences $\{\alpha_i\}$ that have terms summing to $n$ and chains of $P$ with maximal element on rank $n$ and first element $\hat{0}$. Thus, every chain of $P$ with elements $\hat{0} = t_0, t_1,\ldots, t_k$ with $\rho(t_k)=n$ can be described as $\rho(t_i) = \sum_{j=1}^i \alpha_i$. 

It now suffices to count the number of such chains. There are $|r_{\alpha_1}|$ choices for $t_1$. Say we pick $t_1 = k \in P$, where $\rho(k) = \alpha_1$. Then, $t_2$ must equal $l$ where $l \geq k$ and $\rho(l) = \alpha_1+\alpha_2$. There are $|r_{\alpha_2}|$ such choices in $V_{k}$, for $|r_{\alpha_1}||r_{\alpha_2}|$ choices, and continuing yields there are $\sum_{S} \prod{|r_{\alpha_i}|}$ chains of the desired form, which implies the conclusion by coefficient matching.
\end{proof}

Our main focus of Section~\ref{sec: schur-positivity} is on upho posets whose Ehrenborg quasisymmetric functions, which is a symmetric function in this case, is Schur-positive. To this end, the following theorem by Davydov \cite{davydov2000totally},
which has great importance in the theory of total positivity, is highly useful.

\begin{theorem}\label{thm:davydov}
An integral series $f(t)\in 1+t\mathbb{Z}[[t]]$ is totally positive, i.e. $f(t_1)f(t_2)\cdots$ is Schur positive, if and only if it is of the form $f(t) = g(t)/h(t)$ where $g(t),h(t)\in\mathbb{Z}[t]$ such that all the complex roots of $g(t)$ are negative real numbers and all the complex roots of $h(t)$ are positive real numbers.
\end{theorem}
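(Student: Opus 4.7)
The plan is to prove the two directions of the equivalence separately. For the easier ``if'' direction, assume $f(t) = g(t)/h(t)$ with $g(t) = \prod_{i=1}^n (1+\alpha_i t)$ and $h(t) = \prod_{j=1}^m (1-\beta_j t)$ where $\alpha_i, \beta_j > 0$; the normalization $f(0)=1$ forces the leading coefficients of $g$ and $h$ to match. Then
\[
\prod_k f(t_k) \;=\; \prod_{i,k}(1+\alpha_i t_k)\cdot\prod_{j,k}(1-\beta_j t_k)^{-1}.
\]
By the dual Cauchy and Cauchy identities respectively, $\prod_{i,k}(1+\alpha_i t_k) = \sum_\lambda s_\lambda(\alpha_1,\ldots,\alpha_n)\,s_{\lambda'}(\mathbf{t})$ and $\prod_{j,k}(1-\beta_j t_k)^{-1} = \sum_\mu s_\mu(\beta_1,\ldots,\beta_m)\,s_\mu(\mathbf{t})$. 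Since Schur functions take nonnegative values at positive real arguments, both factors are Schur-positive, and their product is Schur-positive by the Littlewood--Richardson rule.

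For the harder ``only if'' direction, my plan is to reduce to the classical Aissen--Edrei--Schoenberg--Whitney (AESW) theorem characterizing totally positive Toeplitz matrices. The key bridge is to show that Schur positivity of $\prod_k f(t_k)$ forces the Toeplitz matrix $T_f = (a_{j-i})_{i,j\geq 0}$ (with $a_n = [t^n]f(t)$) to be totally positive. Truncating to $N$ variables and applying the Jacobi--Trudi identity, the coefficient of $s_\lambda$ in $\prod_{k=1}^N f(t_k)$ equals the determinant $\det(a_{\lambda_i - i + j})_{1\leq i,j\leq N}$, which is a partition-shape minor of $T_f$. Extending to arbitrary minors via skew Schur functions $s_{\lambda/\mu}$ (which also appear with nonnegative coefficients under Schur positivity) handles the remaining minors. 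AESW then yields
\[
f(t) \;=\; e^{\gamma t}\cdot\frac{\prod_i(1+\alpha_i t)}{\prod_j(1-\beta_j t)}, \qquad \gamma,\alpha_i,\beta_j\geq 0,\ \sum(\alpha_i+\beta_j)<\infty.
\]
The integrality hypothesis $f \in 1 + t\mathbb{Z}[[t]]$ then forces $\gamma=0$ and the products to be finite: the exponential factor introduces factorial denominators, and an infinite Blaschke-type product generates irrational (indeed typically transcendental) coefficients, both incompatible with integer coefficients. After clearing scalars using $f(0)=1$, we arrive at $f = g/h$ with $g,h\in\mathbb{Z}[t]$ satisfying the stated root conditions.

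\textbf{Main obstacle.} The hardest technical step is the forward reduction from Schur positivity of $\prod_k f(t_k)$ to total positivity of $T_f$. While partition-shape minors are handled directly by Jacobi--Trudi, establishing nonnegativity of \emph{all} minors requires careful use of skew Schur functions and passage to the infinite-variable limit. The AESW theorem itself is a deep classical result that we would invoke as a black box, and the integrality argument, while conceptually clear, requires care to exclude subtle limiting pathologies such as the partial products $\prod_{i\leq M}(1+\alpha_i t)/\prod_{j\leq M}(1-\beta_j t)$ approximating an irrational coefficient sequence.
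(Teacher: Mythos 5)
The paper does not prove this statement at all: it is quoted as a known theorem of Davydov \cite{davydov2000totally} and used as a black box, so there is no internal proof to compare against. Judged on its own terms, your ``if'' direction is correct and standard: writing $g(t)=\prod_i(1+\alpha_i t)$ and $h(t)=\prod_j(1-\beta_j t)$ with $\alpha_i,\beta_j>0$, the dual Cauchy and Cauchy identities express the two factors of $\prod_k f(t_k)$ as Schur expansions with coefficients $s_\lambda(\alpha)$ and $s_\mu(\beta)$, which are nonnegative at positive real arguments, and the Littlewood--Richardson rule closes the argument. Your identification of the ``only if'' direction with total positivity of the Toeplitz matrix $(a_{j-i})$ is also the right bridge: the coefficient of $s_\lambda$ in $\prod_k f(t_k)$ is the Jacobi--Trudi determinant $\det(a_{\lambda_i-i+j})$, and the general minor is the image of a skew Schur function $s_{\lambda/\mu}=\sum_\nu c^{\lambda}_{\mu\nu}s_\nu$, hence nonnegative whenever all straight-shape coefficients are.

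The genuine gap is the final integrality step, which is precisely the content that distinguishes Davydov's theorem from the classical Aissen--Edrei--Schoenberg--Whitney statement. AESW only gives
$f(t)=e^{\gamma t}\prod_i(1+\alpha_i t)\big/\prod_j(1-\beta_j t)$ with $\gamma,\alpha_i,\beta_j\ge 0$ and $\sum_i\alpha_i+\sum_j\beta_j<\infty$, and you dismiss the exponential factor and the infinite products by asserting they ``typically'' produce non-integer coefficients. That is not a proof: you must rule out \emph{every} such infinite product having all coefficients in $\mathbb{Z}$, not just typical ones, and nonnegativity plus integrality of $a_1=\gamma+\sum_i\alpha_i+\sum_j\beta_j$ says nothing about finiteness of the product. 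The standard repair is a rationality criterion for integral power series: the AESW form is meromorphic on all of $\mathbb{C}$ (the poles $1/\beta_j$ accumulate only at infinity since $\beta_j\to 0$), and a theorem of Fatou--Carlson type shows that an element of $1+t\mathbb{Z}[[t]]$ admitting such a continuation must be a rational function; one then checks that a rational function of the AESW form forces $\gamma=0$ and finitely many $\alpha_i,\beta_j$, and that $g,h$ can be taken in $\mathbb{Z}[t]$. Even here some care is needed when some $\beta_j\ge 1$, since the naive Fatou statement is about the unit disc. Until this step is carried out, the ``only if'' direction is incomplete; alternatively, you could simply cite Davydov as the paper does.
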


In this paper, we construct a large family of Schur-positive upho posets, specifically those whose rank-generating functions have rational zeros and poles: $$F_P(x) = \frac{\prod_{i=1}^n (1+a_ix)}{\prod_{i=1}^m (1-b_ix)}.$$

Then, in Section~\ref{sec: planar}, we aim to categorize the rank-generating functions of all planar posets. We define a planar poset as follows.

\begin{definition}
A ranked poset $P$ is \textit{planar} if there exists a Hasse diagram of $P$ such that every vertex on rank $i$ of $P$ is at $y$-coordinate $i$ and no two edges of the Hasse diagram intersect.
\end{definition}

\begin{example}
As shown in Figure~\ref{fig: preliminaries}, the full binary tree is planar: in the Hasse diagram of the poset, we represent every vertex on rank $i$ at $y$-coordinate $i$. Furthermore, no two edges in the diagram intersect, as desired.
\end{example}
\begin{example}
In Figure~\ref{fig: preliminaries}, the ``Bowtie" poset is not planar, as there is no way to represent the poset with the vertices on rank $i$ at $y$-coordinate $i$ without the edges intersecting. This can be rigorously proven using Lemma~\ref{lem:meet semilattice}, because no two vertices on the same rank of the poset have a unique meet.
\end{example}

Finally, in Section~\ref{sec: irrationality}, we prove the existence of an upho poset with uncomputable rank-generating function. 

\section{Schur-positive upho posets}\label{sec: schur-positivity}

In this section, we prove the existence of a large family of upho posets with Schur-positive Ehrenborg quasisymmetric rank-generating functions. We specifically focus on those with rational zeros and poles. We begin with the following lemma.

\begin{lemma}\label{schur-positive 1-x}
Given positive integers $a_1,a_2,\ldots,a_n$, there exists an upho poset $P$ with rank-generating function $$F_P(x) = \frac{(1+a_1x)(1+a_2x)\cdots(1+a_nx)}{1-x}.$$
\end{lemma}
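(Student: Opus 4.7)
My plan is to construct $P$ explicitly and verify the upho condition via an explicit isomorphism. Define the elements of $P$ to be triples $(k, S, \sigma)$ with $k \in \mathbb{Z}_{\geq 0}$, $S \subseteq \{1,\ldots,n\}$, and $\sigma \colon S \to \mathbb{Z}_{>0}$ satisfying $\sigma(i) \leq a_i$, graded by rank $\rho(k, S, \sigma) = k + |S|$. Summing by $|S|$ gives $|P_r| = \sum_{|S| \leq r} \prod_{i \in S} a_i$, which is exactly the coefficient of $x^r$ in $\prod_i (1+a_i x)/(1-x)$, so the rank-generating function is automatically correct.

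Next, I would declare four kinds of rank-raising cover relations from each $(k, S, \sigma)$: a \emph{chain} cover $(k+1, S, \sigma)$; an \emph{add} cover $(k, S \cup \{i\}, \sigma \cup \{(i,c)\})$ for each $i \notin S$ and $c \in \{1,\ldots,a_i\}$; a \emph{swap} cover $(k+1, S, \sigma_{i \mapsto c})$ for each $i \in S$ and $c \in \{1,\ldots,a_i\} \setminus \{\sigma(i)\}$; and a \emph{remove} cover $(k+2, S \setminus \{i\}, \sigma|_{S \setminus \{i\}})$ for each $i \in S$. A quick tally shows that the up-degree is $1 + \sum_i a_i$ from every element, matching $|P_1|$.

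The heart of the proof is showing $V_{P,s} \cong P$ for every $s = (k_0, S_0, \sigma_0)$. I would exhibit an explicit bijection $\phi_s \colon V_{P,s} \to P$ by classifying each index $i$ relative to $s$ as \emph{unchanged} ($i \in S_0 \cap S$ with $\sigma(i) = \sigma_0(i)$), \emph{swapped} ($i \in S_0 \cap S$ with $\sigma(i) \neq \sigma_0(i)$), \emph{removed} ($i \in S_0 \setminus S$), or \emph{added} ($i \in S \setminus S_0$), and sending $v = (k, S, \sigma)$ to the triple whose support $S'$ consists of exactly the swapped, removed, and added indices (carrying label $\sigma(i)$ in the first two cases and $\sigma_0(i)$ in the last), with chain coordinate $k'$ chosen as the unique value satisfying $k' + |S'| = \rho(v) - \rho(s)$. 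A direct calculation shows that $v \in V_{P,s}$ is equivalent to $k' \geq 0$, and the index classification is easily inverted from $(S_0, \sigma_0)$ and $(S', \sigma')$, so $\phi_s$ is a bijection.

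The main obstacle is verifying $\phi_s$ preserves cover relations. A cover $u \lessdot v$ in $V_{P,s}$ can change an index's status; for example, an add cover of $i$ with label $c$ in $V_{P,s}$ pulls back to an add, swap, or remove cover in $P$ depending on whether $i \notin S_0$, $i \in S_0$ with $c \neq \sigma_0(i)$, or $i \in S_0$ with $c = \sigma_0(i)$. Similar splits occur for the chain, swap, and remove covers. In each sub-case one tracks how the chain coordinate $k'$ shifts (by $0$, $1$, or $2$) and checks that the resulting change in $(S', \sigma', k')$ exactly matches one of the four cover types in $P$. Once this bookkeeping is exhausted, $\phi_s$ is an isomorphism and $P$ is upho.
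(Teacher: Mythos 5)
Your poset is exactly the paper's construction: the triples $(k,S,\sigma)$ encode the points of the box $[0,a_1]\times\cdots\times[0,a_n]$ (with $y_i=\sigma(i)$ on $S$ and $0$ off $S$) placed at rank $k+|S|$, and your four cover types together say precisely ``rank increases by one and at most one coordinate changes,'' which is the paper's cover rule, so the rank-count and the whole argument are essentially the same. The only real difference is that the paper verifies the upho condition more cleanly by translating coordinatewise modulo $a_i+1$ (which manifestly preserves ``differ in at most one coordinate''), whereas your classification-based bijection is the same isomorphism up to relabeling and needs the case bookkeeping you describe; note also that your label assignment is garbled as written ($\sigma(i)$ is undefined for removed indices and $\sigma_0(i)$ for added ones --- you want $\sigma(i)$ for swapped and added indices and $\sigma_0(i)$ for removed ones), though the intended map is clear and correct.
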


\begin{proof}
Let $S$ be the set of all points $(y_1,y_2,\ldots,y_n)$ with $0 \leq y_i \leq a_i$ for all $1 \leq i \leq n$. We define a poset $P$ on $S$, with each element of $P$ belonging to $S$. Let $(y_1,y_2,\ldots,y_n;k)$ denote the element of $P$ on rank $k$ represented by $(y_1,\ldots,y_n)$. Then, define $$(y_1,\ldots,y_n;k) \lessdot (z_1,\ldots,z_n;k+1)$$ if and only if $z_i \neq y_i$ for at most one $1 \leq i \leq n$. See Figure~\ref{fig:gridconstruction} for an example with $a_1=1$, $a_2=2$. We begin by proving that $P$ has the desired rank-generating function.

In what follows, let $e_i$ be the $i$-th elementary symmetric polynomial in variables $a_1,a_2,\ldots,a_n$ for $1 \leq i \leq n$. Furthermore, let $e_0 = 1$ and $e_j = 0$ for $j > n$. Note that $$\prod_{i=1}^n (1+a_ix) = \sum_{i=0}^{\infty} e_ix^i.$$ 

Now, if $r_i$ is the number of elements on rank $i$ of $P$, it suffices to show that $(1-x)\sum_{i=0}^{\infty} r_ix^i = \sum_{i=0}^{\infty} e_ix^i$. 

It suffices to show that the coefficient of each $x^i$ term is the same on both sides of the equation. It is clear that $r_0=e_0=1$, so we are left to show that $r_i -r_{i-1} = e_i$ for all $i > 0$. Note that rank $i$ of $P$ consists of all points in $S$ with at most $i$ nonzero coordinates. There are $e_j$ points in $S$ with exactly $j$ nonzero coordinates, so $r_i = \sum_{j=0}^i e_j$. This easily gives $r_i - r_{i-1} = e_i$, as desired.

We have proved that our construction satisfies the desired rank-generating function, so it remains to show that our poset is upho.

Clearly, $P$ is of finite type. It then suffices to provide an isomorphism $\iota_p:P \longrightarrow V_p$ for any $p \in P$. Let $(y_1,\ldots,y_n;k)$ denote the element of $P$ on rank $k$ that is represented by the point $(y_1,\ldots,y_n)$. Let $p = (p_1,p_2,\ldots,p_n) \in P$. Then, define $$\iota_p: (y_1, \ldots, y_n; k) \mapsto (y_1 + p_1, \ldots, y_n + p_n; k + k_p)$$ where $y_i \in \mathbb{Z}/(a_i+1)\mathbb{Z}$ for all $i$. 
That is, we add $p_i$ to the $i$-th coordinate for all $i$ and take it modulo $a_i+1$.

We have $\iota_p(y_1, \ldots, y_n; k) \geq p$, so that $\iota_p$ maps every element of $P$ to a unique element of $V_p$. Finally, note that $(y_1, \ldots, y_n)$ and $(y_1',\ldots, y_n')$ differ in exactly one coordinate iff $(y_1 + p_1, \ldots, y_n + p_n)$ and $(y_1' + p_1, \ldots, y_n' + p_n)$ also differ in exactly one coordinate.
This implies that $$(y_1, \ldots, y_n; k)\lessdot (y_1',\ldots, y_n'; k + 1) \Longleftrightarrow (y_1 + p_1, \ldots, y_n + p_n; k + k_p)\lessdot (y_1' + p_1, \ldots, y_n' + p_n; k + k_p + 1),$$
so that $\iota_p$ preserves the covering relations of $P$. Thus, $\iota_p$ is an isomorphism between $P$ and $V_p$ for all $p\in P$, so $P$ is upho.
\end{proof}

\begin{figure}[h!]
\centering
\begin{tikzpicture}[scale = 0.300]
\draw(0,0)--(3,0)--(3,2)--(0,2)--(0,0);
\draw(0,1)--(3,1);
\draw(1,0)--(1,2);
\draw(2,0)--(2,2);
\draw(0,1)--(1,2);
\draw(0,2)--(1,1);
\draw(6,0)--(9,0)--(9,2)--(6,2)--(6,0);
\draw(6,1)--(9,1);
\draw(7,0)--(7,2);
\draw(8,0)--(8,2);
\draw(7,1)--(8,2);
\draw(7,2)--(8,1);
\draw(12,0)--(15,0)--(15,2)--(12,2)--(12,0);
\draw(12,1)--(15,1);
\draw(13,0)--(13,2);
\draw(14,0)--(14,2);
\draw(14,1)--(15,2);
\draw(14,2)--(15,1);
\draw(18,0)--(21,0)--(21,2)--(18,2)--(18,0);
\draw(18,1)--(21,1);
\draw(19,0)--(19,2);
\draw(20,0)--(20,2);
\draw(18,0)--(19,1);
\draw(18,1)--(19,0);
\draw(24,0)--(27,0)--(27,2)--(24,2)--(24,0);
\draw(24,1)--(27,1);
\draw(25,0)--(25,2);
\draw(26,0)--(26,2);
\draw(25,0)--(26,1);
\draw(25,1)--(26,0);
\draw(30,0)--(33,0)--(33,2)--(30,2)--(30,0);
\draw(30,1)--(33,1);
\draw(31,0)--(31,2);
\draw(32,0)--(32,2);
\draw(32,0)--(33,1);
\draw(32,1)--(33,0);
\draw(0,8)--(3,8)--(3,10)--(0,10)--(0,8);
\draw(0,9)--(3,9);
\draw(1,8)--(1,10);
\draw(2,8)--(2,10);
\draw(0,9)--(1,10);
\draw(0,10)--(1,9);
\draw(6,8)--(9,8)--(9,10)--(6,10)--(6,8);
\draw(6,9)--(9,9);
\draw(7,8)--(7,10);
\draw(8,8)--(8,10);
\draw(7,9)--(8,10);
\draw(7,10)--(8,9);
\draw(12,8)--(15,8)--(15,10)--(12,10)--(12,8);
\draw(12,9)--(15,9);
\draw(13,8)--(13,10);
\draw(14,8)--(14,10);
\draw(14,9)--(15,10);
\draw(14,10)--(15,9);
\draw(18,8)--(21,8)--(21,10)--(18,10)--(18,8);
\draw(18,9)--(21,9);
\draw(19,8)--(19,10);
\draw(20,8)--(20,10);
\draw(18,8)--(19,9);
\draw(18,9)--(19,8);
\draw(24,8)--(27,8)--(27,10)--(24,10)--(24,8);
\draw(24,9)--(27,9);
\draw(25,8)--(25,10);
\draw(26,8)--(26,10);
\draw(25,8)--(26,9);
\draw(25,9)--(26,8);
\draw(30,8)--(33,8)--(33,10)--(30,10)--(30,8);
\draw(30,9)--(33,9);
\draw(31,8)--(31,10);
\draw(32,8)--(32,10);
\draw(32,8)--(33,9);
\draw(32,9)--(33,8);
\draw(6,-8)--(9,-8)--(9,-6)--(6,-6)--(6,-8);
\draw(6,-7)--(9,-7);
\draw(7,-8)--(7,-6);
\draw(8,-8)--(8,-6);
\draw(6,-7)--(7,-6);
\draw(6,-6)--(7,-7);
\draw(12,-8)--(15,-8)--(15,-6)--(12,-6)--(12,-8);
\draw(12,-7)--(15,-7);
\draw(13,-8)--(13,-6);
\draw(14,-8)--(14,-6);
\draw(13,-7)--(14,-6);
\draw(13,-6)--(14,-7);
\draw(18,-8)--(21,-8)--(21,-6)--(18,-6)--(18,-8);
\draw(18,-7)--(21,-7);
\draw(19,-8)--(19,-6);
\draw(20,-8)--(20,-6);
\draw(20,-7)--(21,-6);
\draw(20,-6)--(21,-7);
\draw(24,-8)--(27,-8)--(27,-6)--(24,-6)--(24,-8);
\draw(24,-7)--(27,-7);
\draw(25,-8)--(25,-6);
\draw(26,-8)--(26,-6);
\draw(24,-8)--(25,-7);
\draw(24,-7)--(25,-8);
\draw(15,-16)--(18,-16)--(18,-14)--(15,-14)--(15,-16);
\draw(15,-15)--(18,-15);
\draw(16,-16)--(16,-14);
\draw(17,-16)--(17,-14);
\draw(15,-15)--(16,-14);
\draw(15,-14)--(16,-15);
\draw(1.50,2.30)--(1.50,7.70);
\draw(1.50,2.30)--(7.50,7.70);
\draw(1.50,2.30)--(13.5,7.70);
\draw(1.50,2.30)--(19.5,7.70);
\draw(7.50,2.30)--(1.50,7.70);
\draw(7.50,2.30)--(7.50,7.70);
\draw(7.50,2.30)--(13.5,7.70);
\draw(7.50,2.30)--(25.5,7.70);
\draw(13.5,2.30)--(1.50,7.70);
\draw(13.5,2.30)--(7.50,7.70);
\draw(13.5,2.30)--(13.5,7.70);
\draw(13.5,2.30)--(31.5,7.70);
\draw(19.5,2.30)--(1.50,7.70);
\draw(19.5,2.30)--(19.5,7.70);
\draw(19.5,2.30)--(25.5,7.70);
\draw(19.5,2.30)--(31.5,7.70);
\draw(25.5,2.30)--(7.50,7.70);
\draw(25.5,2.30)--(19.5,7.70);
\draw(25.5,2.30)--(25.5,7.70);
\draw(25.5,2.30)--(31.5,7.70);
\draw(31.5,2.30)--(13.5,7.70);
\draw(31.5,2.30)--(19.5,7.70);
\draw(31.5,2.30)--(25.5,7.70);
\draw(31.5,2.30)--(31.5,7.70);
\draw(7.50,-5.70)--(1.50,-0.300);
\draw(7.50,-5.70)--(7.50,-0.300);
\draw(7.50,-5.70)--(13.5,-0.300);
\draw(7.50,-5.70)--(19.5,-0.300);
\draw(13.5,-5.70)--(1.50,-0.300);
\draw(13.5,-5.70)--(7.50,-0.300);
\draw(13.5,-5.70)--(13.5,-0.300);
\draw(13.5,-5.70)--(25.5,-0.300);
\draw(19.5,-5.70)--(1.50,-0.300);
\draw(19.5,-5.70)--(7.50,-0.300);
\draw(19.5,-5.70)--(13.5,-0.300);
\draw(19.5,-5.70)--(31.5,-0.300);
\draw(25.5,-5.70)--(1.50,-0.300);
\draw(25.5,-5.70)--(19.5,-0.300);
\draw(25.5,-5.70)--(25.5,-0.300);
\draw(25.5,-5.70)--(31.5,-0.300);
\draw(16.5,-13.7)--(7.50,-8.30);
\draw(16.5,-13.7)--(13.5,-8.30);
\draw(16.5,-13.7)--(19.5,-8.30);
\draw(16.5,-13.7)--(25.5,-8.30);
\node at (16.5,13) {$\vdots$};
\node at (-4,-15) {$P_0$};
\node at (-4,-7) {$P_1$};
\node at (-4,1) {$P_2$};
\node at (-4,9) {$P_3$};
\end{tikzpicture}
\caption{The construction of Lemma~\ref{schur-positive 1-x} with $a_1 = 1$, $a_2 = 2$.}
\label{fig:gridconstruction}
\end{figure}
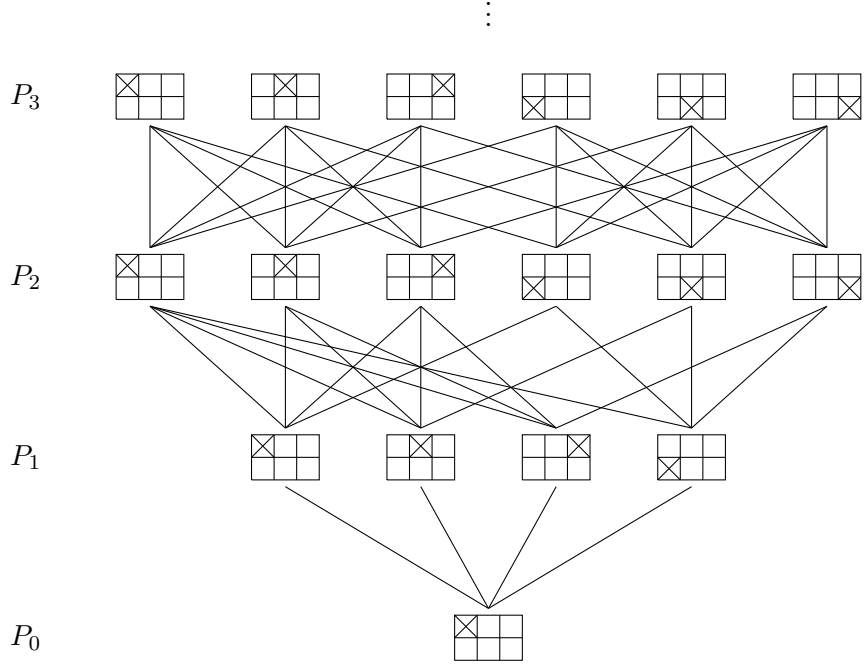

Using this construction, we can prove the following:

\begin{lemma}\label{schur-positive 1-bx mini}
Given positive integers $a_1,a_2,\ldots,a_n,b$, there exists an upho poset $Q$ with rank-generating function $$F_Q(x) = \frac{(1+a_1x)(1+a_2x)\cdots(1+a_nx)}{1-bx}.$$
\end{lemma}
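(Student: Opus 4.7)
The plan is to adapt the construction from the proof of Lemma~\ref{schur-positive 1-x} to produce a poset whose rank-generating function has denominator $1-bx$ instead of $1-x$. Intuitively, the factor $1/(1-x)$ in Lemma~\ref{schur-positive 1-x} arises from the integer-valued ``time'' index $k \in \mathbb{N}$ indexing the ranks, so replacing this index by a word $w$ in a $b$-letter alphabet $\{1,\ldots,b\}^*$ should contribute a $1/(1-bx)$ factor at each rank instead.

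Concretely, I would define $Q$ to have elements parameterized by triples $(y_1,\ldots,y_n;w)$ with $y_i \in \mathbb{Z}/(a_i+1)\mathbb{Z}$ and $w \in \{1,\ldots,b\}^*$, taking the rank of $(y;w)$ to be $|w| + |\{i : y_i \neq 0\}|$. With this indexing, the count of elements at rank $k$ should equal $\sum_{m=0}^{\min(k,n)} b^{k-m} e_m(a_1,\ldots,a_n)$, matching the coefficient of $x^k$ in $\prod(1+a_ix)/(1-bx)$. The cover relations would consist of ``tree'' covers $(y;w)\lessdot(y;w\cdot c)$ for $c \in \{1,\ldots,b\}$, contributing $b$ covers per element, together with ``grid'' covers that modify a single coordinate of $y$; to maintain constant up-degree $b + \sum_i a_i$ as required by the upho condition, grid modifications of already-nonzero coordinates would also be allowed via modular arithmetic, bundled with auxiliary tree steps so that each cover still increases the rank by exactly $1$. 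The isomorphism $\iota_p \colon Q \to V_p$ would then be defined by $(y;w) \mapsto (y + y_p; w_p \cdot w)$, where $p = (y_p; w_p)$ and coordinate addition is taken modulo $a_i + 1$, directly generalizing the isomorphism of Lemma~\ref{schur-positive 1-x}.

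The main obstacle is balancing two constraints that pull against each other: on the one hand, the element count at rank $k$ must equal $\sum_m b^{k-m} e_m$ to produce the claimed RGF, and on the other hand the up-degree at every element must equal $b + \sum_i a_i$ to give the upho property. Naive cover definitions either yield the ``scaled'' RGF $\prod(1+a_ibx)/(1-bx)$ instead of the desired $\prod(1+a_ix)/(1-bx)$, or produce a support-dependent up-degree $b + e_1 - |\text{supp}(y)|$ that breaks upho. Resolving this tension is the heart of the argument: the modular re-modification covers must be introduced with exactly the right multiplicity to compensate for the coordinates already in use, and one must check carefully that after these additions the bijection $\iota_p$ still preserves cover relations and has image exactly $V_p$.
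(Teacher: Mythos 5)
Your parameterization of the elements does get the rank counts right: with rank $|w|+|\mathrm{supp}(y)|$ there are indeed $\sum_m b^{k-m}e_m$ elements at rank $k$. But the lemma is not about counting elements; it is about exhibiting cover relations that simultaneously realize those counts and make every principal order filter isomorphic to the whole poset. That is exactly the part your proposal leaves open, and you say so yourself (``resolving this tension is the heart of the argument''). Concretely: since an upho poset has constant up-degree equal to its number of atoms, here $b+\sum_i a_i$, an element $(y;w)$ gets $b$ tree covers and only $\sum_{i\notin\mathrm{supp}(y)}a_i$ fresh-coordinate covers, leaving a deficit of $\sum_{i\in\mathrm{supp}(y)}a_i$. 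The ``bundled'' covers you propose to fill this deficit do not come in the right multiplicity for free: changing a used coordinate $i$ to another nonzero value while appending one letter gives $(a_i-1)b$ candidates where you need exactly $a_i$ (and zero candidates when $a_i=1$, forcing a different cover type such as zeroing the coordinate and appending two letters). Any such selection is an arbitrary choice, and you must then prove it is equivariant under every map $\iota_p\colon(y;w)\mapsto(y+y_p;\,w_p\cdot w)$ and that the image of $\iota_p$ is exactly $V_p$ --- neither of which is automatic once the covers depend on $\mathrm{supp}(y)$ and on which letters follow $w$. As written, the argument establishes the easy half (the census) and defers the entire difficulty.

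For comparison, the paper avoids defining covers globally at all. It builds $Q$ recursively, rank by rank: rank $i$ is the $i$-th rank of the Lemma~\ref{schur-positive 1-x} poset $P$ (the ``spine'') together with $b-1$ fresh copies of each earlier rank of $Q$ itself, grafted on so that each spine vertex sprouts $b-1$ new copies of $Q$ above it. The recursion $r_{n}=s_n+(b-1)(r_0+\cdots+r_{n-1})$ immediately gives $r_{n+1}-br_n=e_{n+1}$, and uphoness follows by strong induction on rank because every non-spine vertex sits inside a lower-rank copy of $Q$. If you want to salvage your approach, you would need to either write down the extra covers explicitly and verify the isomorphisms, or restructure along these recursive lines.
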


\begin{proof}
We begin by describing the construction of an upho poset $Q$ with the desired rank-generating function. We then prove that $Q$ has the desired rank-generating function and is upho.

We make use of Lemma~\ref{schur-positive 1-x}, which constructs an upho poset $P$ with rank-generating function $$\frac{(1+a_1x)(1+a_2x)\cdots(1+a_nx)}{1-x}.$$ We let $P_i$ be the $i$th rank of $P$ with $|P_i|=s_i$ and $Q_i$ be the $i$th rank of $Q$ with $|Q_i|=r_i$. 

We construct $Q$ recursively, starting from $\hat{0}$ and adding one rank at a time. Suppose we have constructed up to rank $i-1$ of the poset. We describe rank $i$ as a union of $i+1$ disjoint groups of vertices $G_{i,j}$ for $0 \leq j \leq i$, where $j \in \mathbb{Z}$. Let $G_{i,0}$ be a copy of the $i$th rank of $P$, so that $|G_{i, 0}|=  s_i$, and let $G_{i,j}$ consist of $b-1$ sets of vertices that are copies of $Q_{j-1}$, so that $|G_{i, j}| = (b-1)r_{j-1}~\forall~ 1 \leq j \leq i$. We now construct cover relations between ranks $i-1$ and $i$. Firstly, cover relations between $G_{i-1,0}$ and $G_{i,0}$ are isomorphic to those between $P_{i-1}$ and $P_i$ in $P$. We also let every element of $G_{i,1}$ cover every element of $G_{i-1,0}$. 

To connect $G_{i-1,j}$ to rank $i$ for $0<j \leq i-1$, note that $G_{i-1,j}$ consists of $b-1$ sets of vertices that are copies of $Q_{j-1}$, and $G_{i,j+1}$ consists of $b-1$ copies of $Q_j$. Thus, between $G_{i-1,j}$ and $G_{i,j+1}$, we add cover relations isomorphic to those between $Q_{j-1}$ and $Q_j~\forall~ 1 \leq j \leq i-1$. 

\begin{figure}
    \centering
    \includegraphics[width = 440pt]{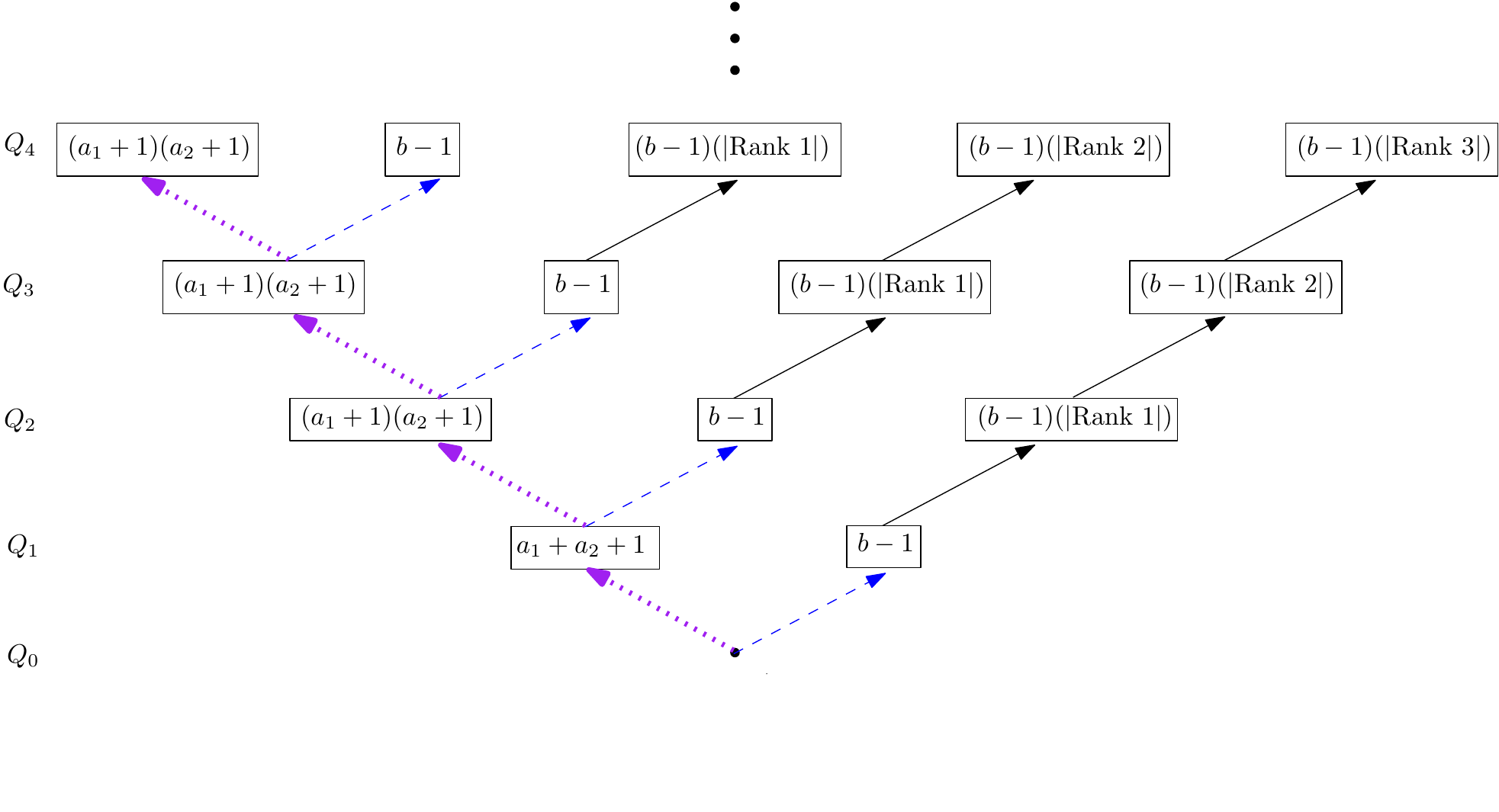}
    \caption{This figure shows the upho poset $Q$ with rank-generating function $\frac{(1+a_1x)(1+a_2x)}{1-bx}$. Labelled boxes indicate groups of vertices, and $|\text{Rank }n|$ represents the number of vertices on rank $n$ of $Q$.}
    \label{1-bx figure}
\end{figure}

Figure~\ref{1-bx figure} shows the construction for rank-generating function $\frac{(1+a_1x)(1+a_2x)}{1-bx}$. Dotted arrows indicate sets of cover relations isomorphic to those in the poset described in Lemma~\ref{schur-positive 1-x}. Dashed arrows denote complete bipartite graphs, where every element in the higher group covers every element in the lower group. Note that each group of $b-1$ elements starts $b-1$ copies of $Q$, so solid arrows denote $b-1$ sets of cover relations isomorphic to those found between earlier ranks in the poset. For example, the solid arrow between $G_{1,1}$ and $G_{2,2}$ denotes $b-1$ copies of the cover relations between $Q_0$ and $Q_1$. 

Now that we have described our construction, we must prove that it has the desired rank-generating function. Let $F_Q(x) = \sum_{i=0}^{\infty} r_ix^i$, where $r_i$ is the number of elements in rank $i$. Also, let $e_i$ be the $i$-th elementary symmetric polynomial in variables $a_1,\ldots,a_n$, with $e_0=1$ and $e_j=0$ for all $j > n$.

Then, for the rank-generating function of $Q$ to be $$F_Q(x) = \frac{(1+a_1x)(1+a_2x)\cdots(1+a_nx)}{1-bx},$$ we must have $r_n-br_{n-1}=e_n$ by coefficient matching. Note that $$r_n = \sum_{i=0}^n |G_{n,i}| = s_n + (b-1)(r_0+\cdots+r_{n-1}) = (b-1)(r_0+\cdots+r_{n-1}) + \sum_{i=0}^n e_i,$$ which means that $$r_{n+1} = (b-1)(r_0+\cdots+r_n) + \sum_{i=0}^{n+1}e_i.$$ This implies $$r_{n+1} - r_n = (b-1)r_n + e_{n+1} \implies r_{n+1} - br_n = e_{n+1}.$$ Together with the fact that $r_0=1$, this implies that we have the desired rank-generating function.

Next, we must prove that the poset is upho.

Consider an element $q \in Q$ such that $q$ is in group $G_{i,0}$ for some $i$. Note that the poset formed by just considering the groups $G_{i,0}$ for $i \geq 0$ is exactly $P$ from Lemma~\ref{schur-positive 1-x}, which we know is upho. Then, note that $G_{i+k,l}$ is isomorphic to $G_{k,l}$ for $1 \leq l \leq k$ and $k \geq 1$, and the cover relations between $G_{i+k,l}$ and $G_{i+k+1,l+1}$ are the same as those between $G_{k,l}$ and those between $G_{k+1,l+1}$ as well. This means that $V_q$ is isomorphic to $Q$, as desired. 

Now, take $q \in Q$ such that $q$ is not in $G_{i,0}$ for some $i$. We strong induct on the rank of $Q$ that $q$ belongs to. The base case is true by construction (the elements in $G_{1,1}$ form $b-1$ disjoint posets that are all isomorphic to $Q$). For the inductive step, assume that for all $q \in G_{i,j}$ for $i \leq k$ and $0 < j \leq i$, $V_q$ is isomorphic to $Q$. By construction, if $q \in G_{i+1,1}$, $V_q$ is isomorphic to $Q$ for the same reason as the base case. 

If $q \in G_{i+1,k}$ for $k \geq 2$, then it is in a group of cardinality $(b-1)\cdot r_{k-1}$. Specifically, it is a member of one of the $b-1$ disjoint posets stemming from group $G_{i-k+2,1}$, which we know is isomorphic to $Q$ by the inductive hypothesis. Call this poset $Q'$. This means $q$ is in $G'_{k-1,l}$ for some $0 \leq l \leq k-1$, where $G'_{k-1,l}$ is the group in $Q'$ isomorphic to group $G_{k-1,l}$ in $Q$. Since $\rho(q') < \rho(q)$, $V_{q'}$ is isomorphic to $Q$ by strong induction, where $q'$ is the corresponding element to $q$ in $G_{k-1,l}$. Thus, $V_q$ is isomorphic to $Q'$, which is isomorphic to $Q$, as desired.
\end{proof}

Finally, we extend this construction to all upho posets with Schur-positive Ehrenborg quasisymmetric rank-generating functions with rational zeros and poles.

\begin{proof}[Proof of Theorem~\ref{schur-positive 1-bx}]
Note that there exists an upho poset with rank-generating function $\frac{1}{1-cx}$ for any $c$ (specifically, a $c$-tree). Furthermore, Lemma~\ref{schur-positive 1-bx mini} states that there exists an upho poset with rank-generating function $$\frac{(1+a_1x)(1+a_2x)\cdots(1+a_nx)}{1-bx}.$$

By Lemma~\ref{lem:multiplicative}, the product of two upho posets is upho and their rank-generating functions multiply, so we have that there exists an upho poset with rank-generating function $$\frac{(1+a_1x)(1+a_2x)\cdots(1+a_nx)}{(1-b_1x)(1-b_2x)\cdots(1-b_mx)}$$ for positive integers $a_1,a_2,\ldots,a_n,b_1,b_2,\ldots,b_m$, as desired.
\end{proof}

We propose the following conjecture, which is a more general case of Theorem \ref{schur-positive 1-bx}.

\begin{conjecture}\label{schur-positive conjecture}
A power series $f(t)\in 1+t\mathbb{Z}[[t]]$ is the rank-generating function of an upho poset $P$ whose Ehrenborg quasisymmetric function is a Schur-positive symmetric
function if and only if $f(t)$ has the form $g(t)/h(t)$, $g,h\in\mathbb{Z}[t]$, where the roots of $g(t)$ are real and negative, and the roots of $h(t)$ are real and positive.
\end{conjecture}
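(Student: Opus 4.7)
The plan is to prove the two directions of the biconditional separately. The forward direction is essentially already in hand. If $P$ is upho and $E_P$ is Schur positive, then by Lemma~\ref{lemma:ehrenborg is product of rgfs} we have $E_P(x_1,x_2,\ldots)=\prod_i F_P(x_i)$, so Schur positivity of $E_P$ is equivalent to total positivity of $F_P$ in the sense of Davydov. Theorem~\ref{thm:davydov} then forces $F_P(t)=g(t)/h(t)$ with $g,h\in\mathbb{Z}[t]$, $g$ having only negative real roots, and $h$ having only positive real roots. This direction therefore requires no new ideas beyond the preliminaries.

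For the reverse direction, given any $f=g/h$ of the stated form, the task is to construct an upho poset $P$ with $F_P=f$; Schur positivity of $E_P$ then follows automatically by running Lemma~\ref{lemma:ehrenborg is product of rgfs} and Theorem~\ref{thm:davydov} in reverse. Theorem~\ref{schur-positive 1-bx} already produces such a poset in the special case where $g(t)=\prod(1+a_i t)$ and $h(t)=\prod(1-b_j t)$ with $a_i,b_j\in\mathbb{Z}_{>0}$, that is, when both polynomials split over $\mathbb{Z}$ into linear factors. The conjectural content lies in the case where $g$ or $h$ has irrational real roots — for example $g(t)=1+3t+t^2$, whose roots $(-3\pm\sqrt{5})/2$ are negative but cannot be split into integer-coefficient linear factors. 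Such $f$ lies outside the range of the multiplicative construction of Lemma~\ref{lem:multiplicative}, so a genuinely new construction is required.

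My plan would be to extend the rank-by-rank recursive scheme of Lemma~\ref{schur-positive 1-bx mini}. Writing $g(t)=\sum_k \sigma_k t^k$ and $h(t)=\sum_\ell d_\ell t^\ell$, any candidate rank sizes $r_k$ must satisfy the linear recurrence $r_k=-d_1 r_{k-1}-d_2 r_{k-2}-\cdots-d_m r_{k-m}+\sigma_k$, and one would try to build $P$ so that the vertices newly introduced at rank $k$ number exactly $r_k$ and are organized into blocks that propagate the upho isomorphism on every principal filter, as in the proof of Lemma~\ref{schur-positive 1-bx mini}. The main obstacle, and the reason the statement is still conjectural, is signed cancellation: for a general $h$ with positive real roots the coefficients $d_\ell$ alternate in sign, so the recurrence is not a clean additive "how many new vertices to add" rule but a signed integer combination of several earlier rank sizes. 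The purely additive block-gluing used throughout Section~\ref{sec: schur-positivity} cannot encode such cancellations, and I expect that the resolution will require a more flexible combinatorial framework — perhaps the monoid-with-homogeneous-relations presentations of Section~\ref{sec: irrationality}, which output a strictly wider class of rank-generating functions. A sensible intermediate milestone would be the case $f=g/(1-bt)$ with $g\in\mathbb{Z}[t]$ having irrational negative roots, which isolates the numerator difficulty from that of the denominator.
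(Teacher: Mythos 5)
The statement you are addressing is labelled as a \emph{conjecture} in the paper, and the paper offers no proof of it; Theorem~\ref{schur-positive 1-bx} is presented only as the special case in which $g$ and $h$ split into integer linear factors. So there is no proof of the authors' to compare against, and your proposal does not close the gap either: it is a plan, not a proof. What you do establish is correct and is exactly the reduction the authors implicitly rely on. The forward direction is immediate from Lemma~\ref{lemma:ehrenborg is product of rgfs} (for an upho poset $E_P=\prod_i F_P(x_i)$, so Schur-positivity of $E_P$ is total positivity of $F_P$) together with Theorem~\ref{thm:davydov}; and conversely, \emph{if} an upho poset with $F_P=f$ exists, Schur-positivity of $E_P$ is automatic by the same two results. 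Hence the entire content of the conjecture is the existence statement: for every $f=g/h$ with $g$ negative-real-rooted and $h$ positive-real-rooted, construct an upho poset whose rank-generating function is $f$.

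That existence statement is the genuine gap, and your sketch does not resolve it. You correctly identify the obstruction: when $g$ has irrational negative roots (e.g.\ $g(t)=1+3t+t^2$) or $h$ has irrational positive roots, the coefficients in the recurrence $r_k=-d_1 r_{k-1}-\cdots-d_m r_{k-m}+\sigma_k$ involve signed cancellation, and the additive block-gluing of Lemmas~\ref{schur-positive 1-x} and~\ref{schur-positive 1-bx mini} only ever \emph{adds} vertices and cover relations rank by rank, so it cannot realize a denominator that is not a product of factors $(1-b_jx)$ with $b_j\in\mathbb{Z}_{>0}$. Your suggestion to use the monoid presentations of Section~\ref{sec: irrationality} is a reasonable direction, but nothing in that section controls Schur-positivity or produces a prescribed rational $F_P$, so it is at present only a heuristic. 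In short: your write-up is an accurate account of what is known and why the rest is hard, but it should not be mistaken for a proof of the conjecture, which remains open exactly where the paper leaves it.
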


Note that Theorem~\ref{schur-positive 1-bx} is a special case of Conjecture~\ref{schur-positive conjecture} where the roots of $g(t)$ and $h(t)$ are rational.

\section{Planar upho posets}\label{sec: planar}

In this section, we classify the rank-generating functions of all planar upho posets. We begin with the following lemma. Although it is essentially known (see Corollary 2.4 of \cite{kelly1975planar}), we include the proof here for completeness. 

\begin{lemma}\label{lem:meet semilattice}
Every planar upho poset is a meet semilattice. In other words, any two vertices of a planar upho poset have a unique greatest lower bound.
\end{lemma}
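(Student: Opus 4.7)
The plan is to argue by contradiction via a topological crossing argument. Since $P$ contains $\hat{0}$ by Lemma~\ref{lem: upho-posets-infinite} and is of finite type, for any $x, y \in P$ the set $\downarrow x \cap \downarrow y$ is nonempty and finite, so maximal common lower bounds exist; what we need to show is uniqueness. Suppose for contradiction that $a \neq b$ are two incomparable maximal common lower bounds of $x$ and $y$, and pick saturated chains $\gamma_{ax}$, $\gamma_{ay}$, $\gamma_{bx}$, $\gamma_{by}$ in the Hasse diagram connecting $a, b$ to $x, y$.

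The first step is purely order-theoretic: the ``diagonal'' pairs $(\gamma_{ax}, \gamma_{by})$ and $(\gamma_{ay}, \gamma_{bx})$ are each vertex-disjoint. Any common vertex $c$ of, say, $\gamma_{ax}$ and $\gamma_{by}$ would satisfy $a, b \leq c \leq x, y$; since $a, b$ are incomparable, $c$ cannot equal either, so $c > a$ would be a common lower bound of $x, y$ strictly above $a$, contradicting its maximality.

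The second step invokes planarity. Each chain is a monotone-height curve in the planar embedding, so it has a well-defined $x$-coordinate at every rank in its range. After possibly relabeling $\{x, y\}$, one of the two diagonal pairs --- say $(\gamma_{ay}, \gamma_{bx})$ --- is in ``crisscross'' configuration: viewing the two curves' $x$-coordinates as functions of height over the common range $[\max(\rho(a), \rho(b)),\, \min(\rho(x), \rho(y))]$, the difference takes opposite signs at the two endpoints. By the intermediate value theorem the curves must coincide at some intermediate height, and since the planar Hasse diagram admits no edge crossings, this coincidence must occur at a shared vertex --- contradicting the vertex-disjointness from the previous step.

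The main obstacle is carrying out the ``relabel for sign change'' step rigorously when $a, b, x, y$ sit at different ranks. Because $\gamma_{ax}$ and $\gamma_{ay}$ share only $a$, they pass through distinct vertices at rank $\max(\rho(a), \rho(b))$, each of which moreover differs from $b$ (else $a \leq b$); a short case analysis on the left-to-right orderings of these three vertices at the lower bounding rank, combined with the analogous analysis at $\min(\rho(x), \rho(y))$, confirms that at least one diagonal pair swaps sides. Morally this is a path-version of the classical fact that $K_{2,2}$ admits no planar embedding with its two parts placed on parallel horizontal lines and edges monotone in height.
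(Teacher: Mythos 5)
Your overall strategy --- reduce to two incomparable maximal common lower bounds, take four saturated chains, and force a crossing via planarity --- is the same one the paper uses, and your first step (the diagonal pairs $(\gamma_{ax},\gamma_{by})$ and $(\gamma_{ay},\gamma_{bx})$ are vertex-disjoint by maximality of $a$ and $b$) is correct. The gap is in the second step: it is \emph{not} true that one of the two diagonal pairs must swap sides over the common height range $[\max(\rho(a),\rho(b)),\min(\rho(x),\rho(y))]$. Concretely, take $\rho(a)=\rho(b)$ and $\rho(x)<\rho(y)$, with $b$ to the left of $a$ at the bottom rank, and with the left-to-right order at rank $\rho(x)$ being: the vertex of $\gamma_{by}$, then $x$ (where $\gamma_{ax}$ and $\gamma_{bx}$ both terminate), then the vertex of $\gamma_{ay}$. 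Then $\gamma_{ax}$ lies to the right of $\gamma_{by}$ at both ends of the common range, and $\gamma_{ay}$ lies to the right of $\gamma_{bx}$ at both ends, so neither diagonal pair is in crisscross position and the intermediate value theorem gives you nothing. (Note also that your parenthetical claim that $\gamma_{ax}$ and $\gamma_{ay}$ pass through \emph{distinct} vertices at rank $\max(\rho(a),\rho(b))$ already fails here, since both pass through $a$ itself.) Your argument really only closes the case where $a,b$ share a rank and $x,y$ share a rank, which is the paper's first case.

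To handle configurations like the one above you need information from \emph{outside} the common height range, which is what the paper's remaining two cases supply via auxiliary chains. For instance, in the configuration described, $\gamma_{ay}$ and $\gamma_{by}$ continue above rank $\rho(x)$ and converge at $y$; together with the horizontal segment at height $\rho(x)$ they enclose a bounded region whose bottom boundary contains $x$ strictly between them. Since the poset is upho and infinite, there is an infinite ascending saturated chain starting at $x$; it is trapped in that bounded region and must meet $\gamma_{ay}$ or $\gamma_{by}$ at a vertex $c$ (planarity forbids meeting along edge interiors), giving $x\leq c\leq y$ and contradicting the incomparability of $x$ and $y$. Some argument of this kind --- using the upho/infinite hypothesis to produce an ascending chain that cannot escape a region bounded by the existing chains --- is needed to complete the proof; the pure four-chain crisscross argument does not suffice.
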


\begin{proof}

Assume the contrary. Then two vertices of $P$, $v_1$ and $v_2$, do not have a meet. Because both vertices are greater than $\hat{0}$, this implies that $v_1$ and $v_2$ do not have a unique greatest lower bound. Suppose that $v_3$ and $v_4$ are incomparable and that they are both lower bounds of $v_1$ and $v_2$. Furthermore, suppose that any other lower bound of the two vertices is not greater than either $v_3$ of $v_4$. We consider all possible arrangements of the four elements, shown in Figure~\ref{fig: planar lemma}. For simplicity, chains are represented by straight lines.

In the leftmost image, the chains connecting $v_1$ and $v_2$ to $v_3$ and $v_4$ intersect. Because we assumed that there does not exist a lower bound greater than either $v_3$ or $v_4$, the chains cannot intersect at another vertex. However, the edges of the chains also cannot intersect due to our planarity condition, and the leftmost arrangement is invalid.

In the middle image, note that $v_3$ must cover a vertex $v_5$ on the same rank as $v_4$. If $v_4=v_5$, then $v_3>v_4$, which is a contradiction. Otherwise, the chain connecting $v_3$ to $v_5$ must intersect either the chain connecting $v_1$ to $v_4$ or $v_2$ to $v_4$. Because the poset is planar, the chains must intersect in another vertex, $v_6$. However, $v_3>v_6>v_4$, again a contradiction. Thus, the middle arrangement is invalid. 

Finally, in the rightmost image, there must exist a vertex $v_7$ on the same rank as $v_1$ that is greater than $v_2$. Then the chain connecting $v_2$ to $v_7$ must intersect with the chain connecting $v_1$ to $v_4$ at some vertex $v_8$. In this case, $v_1$ and $v_2$ are comparable with greatest lower bound $v_2$. Since $v_2$ is greater than both $v_3$ and $v_4$, we have achieved the desired contradiction. 
\end{proof}

\begin{figure}[h!]
 \centering
 \includegraphics[width = 370pt]{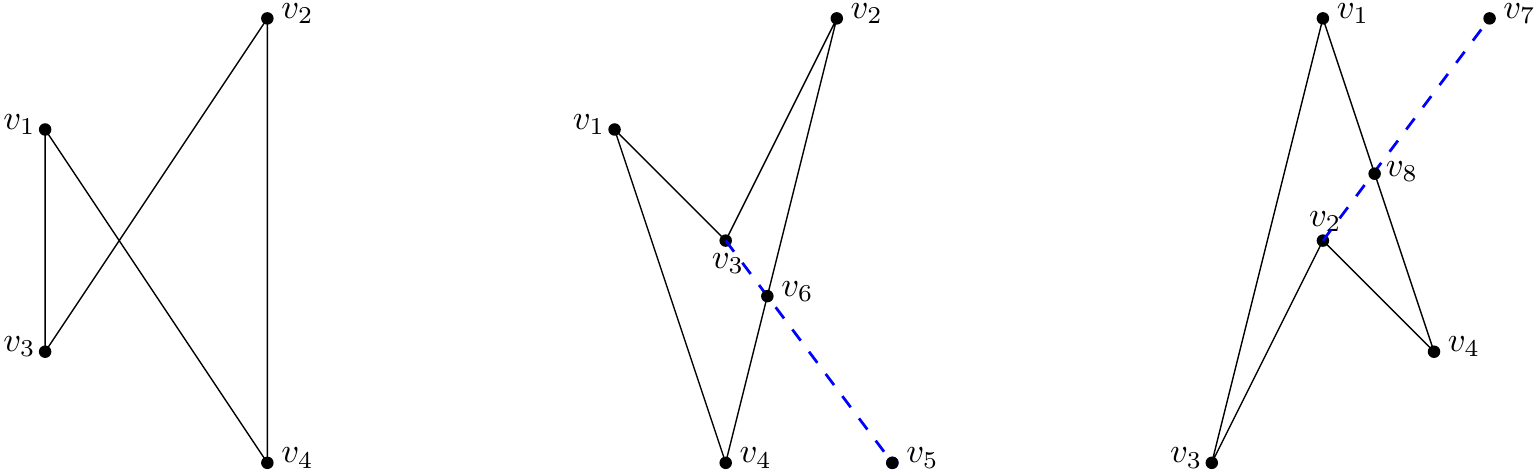}
 \caption{Example for Lemma~\ref{lem: b-1 bifurcated vertices} and Lemma~\ref{lem:meet semilattice}. Chains are represented by straight lines for simplicity.}
 \label{fig: planar lemma}
\end{figure}

\begin{lemma}\label{lem: cover 2 vertices}
In a planar upho poset, no vertex can cover more than $2$ other vertices.
\end{lemma}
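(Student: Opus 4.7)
My plan is to suppose, for contradiction, that a vertex $v$ at rank $k$ covers three distinct elements $u_1, u_2, u_3$ (all at rank $k-1$), labeled so that in the planar embedding $x(u_1) < x(u_2) < x(u_3)$.

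The first step is to pin down the up-degree. Since $V_{P,s}\cong P$ for every $s$, the up-degree is the same at every vertex; call it $b$. If $b = 0$ then $P = \{\hat{0}\}$ and if $b = 1$ then $P$ is a chain, so in either case no vertex covers three elements, a contradiction. Hence $b \geq 2$, and in particular there is a second cover $v' \neq v$ of $u_2$ at rank $k$.

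The main step is to derive a planarity contradiction in the strip between ranks $k-1$ and $k$. I would use the level-planarity criterion: two edges $(a,b)$ and $(a',b')$ drawn in this strip, with $a,a'$ on the lower line and $b,b'$ on the upper line, cross iff the orderings of $\{a, a'\}$ on the lower line and $\{b, b'\}$ on the upper line are opposite. Applied to $(u_1, v)$ and $(u_2, v')$, the inequality $x(u_1) < x(u_2)$ together with non-crossing forces $x(v) < x(v')$. Applied to $(u_3, v)$ and $(u_2, v')$, the inequality $x(u_3) > x(u_2)$ together with non-crossing forces $x(v) > x(v')$. These are contradictory, so no such $v'$ exists, contradicting the conclusion of Step 1.

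The main obstacle is that the planarity definition allows an edge to leave its natural strip and loop around the drawing, so the level-planarity criterion above needs justification. I would handle this either by a straightening lemma---any planar Hasse-diagram drawing of a ranked poset can be isotoped so that each edge is monotone in $y$ and hence stays in its strip---or by a direct Jordan-curve argument: the edges $v u_1, v u_3$ together with the horizontal segment from $u_1$ to $u_3$ on the line $y = k-1$ bound a closed topological triangle $\triangle$; the edge $u_2 v'$ starts at $u_2$ on the base of $\triangle$ and moves into its interior, yet cannot reach any rank-$k$ vertex besides $v$ without crossing $v u_1$ or $v u_3$.
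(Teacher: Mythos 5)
Your argument is essentially the paper's: the paper likewise takes the middle one of the three covered vertices, observes that it is trapped between the two outer edges and hence can be covered by no vertex other than $v$, and derives a contradiction with the upho condition (constant up-degree $\geq 2$). You spell out more carefully than the paper both why the up-degree must be at least $2$ and why planarity actually traps the middle vertex, but the underlying idea is identical.
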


\begin{proof}
Suppose a vertex $v$ covers $3$ vertices $v_1$, $v_2$, and $v_3$. Without loss of generality, let $v_2$ be between the other two vertices in a planar Hasse diagram of the poset. Then, no other vertices may cover $v_2$, contradicting the upho conditions.
\end{proof}

For clarity, we fix an arbitrary planar Hasse diagram for every planar upho poset. We call two vertices of a planar upho poset $P$ as ``adjacent" if they belong to the same rank and if, within the planar Hasse diagram of $P$, no other vertex lies between them. We can split all vertices that cover more than $1$ other vertex into the following two categories.

\begin{definition}\label{def: root-bifurcated}
A vertex $v$ of a planar upho poset is called \textit{root-bifurcated} if it covers exactly $2$ vertices which satisfy the following conditions:
\begin{enumerate}
    \item their meet is $\hat{0}$, and
    \item they are adjacent in the planar Hasse diagram of the poset.
\end{enumerate}
\end{definition}

\begin{definition}\label{def: bifurcated}
A vertex $v$ of a planar upho poset is called \textit{bifurcated} if it covers exactly $2$ vertices which satisfy the following conditions:
\begin{enumerate}
    \item their meet is not $\hat{0}$, and
    \item they are adjacent in the planar Hasse diagram of the poset.
\end{enumerate}
\end{definition}

It is important to note the following:

\begin{lemma}\label{lem: bifurcated}
In a planar upho poset, any vertex $v$ covering multiple other vertices must be either \textit{bifurcated} or \textit{root-bifurcated}. 
\end{lemma}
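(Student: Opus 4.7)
The plan is to combine Lemma~\ref{lem: cover 2 vertices} with a topological planarity argument. By Lemma~\ref{lem: cover 2 vertices}, any vertex $v$ covering multiple vertices covers exactly two of them, say $v_1$ and $v_2$, both at rank $\rho(v)-1$. Once I show that $v_1$ and $v_2$ are adjacent at rank $\rho(v)-1$ in the fixed planar Hasse diagram, the meet of $v_1$ and $v_2$ is either $\hat{0}$ (putting $v$ in the root-bifurcated category) or not (putting $v$ in the bifurcated category), so these two cases exhaust all possibilities.

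To establish adjacency, I would proceed by contradiction: suppose some vertex $w$ at rank $\rho(v)-1$ lies strictly between $v_1$ and $v_2$ in the planar embedding. Since $P$ is upho and infinite by Lemma~\ref{lem: upho-posets-infinite}, the isomorphism $V_w\cong P$ forces $w$ to have the same (positive) up-degree as $\hat{0}$, so I can let $w'$ be a vertex covering $w$. Next, I would consider the simple closed curve $\gamma$ formed by the edges $v$--$v_1$ and $v$--$v_2$ together with the horizontal segment at height $\rho(v)-1$ joining $v_1$ to $v_2$. By the Jordan curve theorem $\gamma$ bounds a region $R$, and by construction the only point of $\overline{R}$ at height $\rho(v)$ is the apex $v$ itself. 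The edge $w$--$w'$ starts at $w$ on the bottom of $\gamma$ and enters $R$ by leaving the horizontal line upward; since this edge cannot cross $v$--$v_1$ or $v$--$v_2$ (planarity) and cannot return to height $\rho(v)-1$ (edges of the Hasse diagram may be drawn monotonically in the $y$-coordinate without losing planarity), it must terminate at a point of $\overline{R}$ at height $\rho(v)$. This forces $w'=v$, so $v$ covers $w$ in addition to $v_1$ and $v_2$, contradicting Lemma~\ref{lem: cover 2 vertices}.

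The hard part will be the topological bookkeeping surrounding $\gamma$: I need to justify that the two edges together with the horizontal segment genuinely form a simple closed curve (the edges should meet the horizontal line only at $v_1$ and $v_2$), and that the closed bounded region has no points at height $\rho(v)$ other than $v$. Both facts follow easily once one pins down a normal form for the planar embedding, for instance by straightening each edge into a line segment in the strip between consecutive ranks, which is the cleanest way to make the Jordan-curve step rigorous. Beyond that, the reduction from ``covers multiple others'' to ``covers exactly two adjacent others'' is immediate.
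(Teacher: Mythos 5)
Your proof takes essentially the same route as the paper: reduce to exactly two covered vertices via Lemma~\ref{lem: cover 2 vertices}, then argue by planarity that a vertex $w$ trapped between $v_1$ and $v_2$ would admit no cover other than $v$ itself, which is impossible; your Jordan-curve formulation merely makes explicit the one-line topological claim the paper borrows from the proof of Lemma~\ref{lem: cover 2 vertices}. The only point you elide is that the dichotomy ``meet equal to $\hat{0}$ or not'' presupposes that $v_1$ and $v_2$ have a meet at all, which is exactly Lemma~\ref{lem:meet semilattice} and should be cited.
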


\begin{proof}
By Lemma~\ref{lem: cover 2 vertices}, $v$ must cover exactly $2$ vertices $v_1$ and $v_2$. Furthermore, by Lemma~\ref{lem:meet semilattice}, $v_1$ and $v_2$ must have a meet (satisfying condition $1$ of either \textit{bifurcated} or \textit{root-bifurcated} vertices). Thus, we are left to that $v_1$ and $v_2$ are adjacent in the planar representation of the poset (condition $2$). However, as in the proof of Lemma~\ref{lem: cover 2 vertices}, any vertex between $v_1$ and $v_2$ may not be covered by any vertices, which contradicts the upho conditions. Thus, $v_1$ and $v_2$ must be adjacent in the planar Hasse diagram of the poset, as desired.
\end{proof}

From Definition~\ref{def: root-bifurcated}, we obtain the following lemma.

\begin{lemma}\label{lem: adj atoms}
Any two vertices $v_1$ and $v_2$ that are covered by a \textit{root-bifurcated} vertex $v$ must each be greater than exactly one atom, and those atoms must be adjacent in the planar Hasse diagram of the poset.
\end{lemma}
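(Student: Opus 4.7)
The plan is to prove both claims of the lemma through a single planarity-and-upho argument. I would start by noting two basic facts: each $v_i$ is above at least one atom (any saturated chain from $\hat 0$ to $v_i$ traverses rank $1$), and no atom lies below both $v_1$ and $v_2$ (since $v_1 \wedge v_2 = \hat 0$). I would also establish a useful preliminary via planarity: if $\beta$ and $\beta'$ are two atoms below a common vertex $v_i$, then the saturated chains from $\beta, \beta'$ up to $v_i$, closed at the bottom by the cover edges from $\hat 0$, bound a closed region whose only rank-$\rho(v_i)$ vertex is $v_i$ itself; any atom $\gamma$ strictly between $\beta$ and $\beta'$ in the planar atom order is trapped inside this region, and the only way an upward chain from $\gamma$ can leave is through $v_i$, so $\gamma \leq v_i$. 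Hence the atoms below a given vertex $v_i$ form a contiguous interval in the planar atom order.

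The core of the argument is a single contradiction. Assume either that $v_1$ has more than one atom below it or that the unique atom $\alpha_1 \leq v_1$ is not adjacent in the planar atom order to the unique atom $\alpha_2 \leq v_2$. In either case one can locate an atom $\delta$ trapped inside the closed curve obtained by concatenating a chain from $\hat 0$ up through an atom to $v_1$, the two cover edges $v_1 \lessdot v \gtrdot v_2$, a chain down from $v_2$ through an atom, and the appropriate $\hat 0$-cover edges, such that $\delta \not\leq v_1$ and $\delta \not\leq v_2$. The only rank-$\rho(v)$ boundary vertex of this region is $v$, so any upward chain from $\delta$ that exits the bounded region must pass through $v$, yielding $\delta \leq v$. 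Consequently $V_\delta \setminus V_v$ is contained in the finite portion of the region and is itself finite, and the upho isomorphisms $V_\delta \cong P \cong V_v$ give $|P_{m+d}| = |P_m|$ for $d = \rho(v) - \rho(\delta) > 0$ and all sufficiently large $m$.

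This eventual periodicity of rank sizes contradicts the growth of any planar upho poset. The existence of a root-bifurcated vertex forces the up-degree $b$ to be at least $2$ (distinct atoms sit below $v_1$ and $v_2$), and the edge count $b \cdot |P_n| \leq 2 |P_{n+1}|$ coming from Lemma~\ref{lem: cover 2 vertices} combined with the meet-semilattice structure from Lemma~\ref{lem:meet semilattice} rules out rank sizes being eventually constant when $b \geq 2$. The main obstacle I anticipate is the planar-topological step: specifically, ruling out that an upward chain from $\delta$ escapes the bounded region through an interior vertex of one of the bounding chains, which requires showing that such an exit would force $\delta \leq v_1$ or $\delta \leq v_2$ and thus violate the meet condition. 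The degenerate base case $\rho(v) = 2$ is immediate, since $v_1$ and $v_2$ are themselves atoms and their adjacency is part of the root-bifurcated hypothesis.
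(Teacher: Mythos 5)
Your proposal for the adjacency claim works, but the ``exactly one atom'' half has a genuine gap. The pivotal assertion is that ``in either case one can locate an atom $\delta$ \dots such that $\delta \not\leq v_1$ and $\delta \not\leq v_2$.'' This fails in precisely the case that half of the lemma must rule out: suppose $v_1 \geq y_1, y_2$ and $v_2 \geq y_3$ with $y_1, y_2, y_3$ consecutive atoms. By your own (correct) contiguity observation, the atoms under $v_1$ form an interval in the planar atom order; if that interval abuts the interval under $v_2$, then every atom inside your closed curve (routed through $y_1$ and $y_3$) lies below $v_1$, and routing the curve through $y_2$ instead traps nothing. Taking $\delta = y_2 \leq v_1$ destroys the argument: an upward chain from $\delta$ may legitimately exit the region through $v_1$ or through an interior vertex of the chain from $y_1$ to $v_1$, so it can reach elements of $V_{v_1}$ that are not above $v$; since $v_1$ has up-degree $b \geq 3$ and is covered by vertices other than $v$, the set $V_\delta \setminus V_v$ need not be finite and the rank-size periodicity never materializes. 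You need a separate device here. The paper's proof handles exactly this configuration by running a chain upward from $y_2$ that always selects a cover which is neither leftmost nor rightmost; such a chain can never merge with the bounding chains $y_1 \to v_1$ and $y_3 \to v_2$ (a merge would create a bifurcated vertex in a position forbidden by planarity), so it produces a vertex strictly between $v_1$ and $v_2$ on their common rank, contradicting their adjacency.

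For the adjacency claim itself your route is sound and genuinely different from the paper's: where the paper again uses the non-extremal middle chain started at a gap atom, you trap the gap atom $\delta$ inside a Jordan curve, conclude $\delta \leq v$, and contradict the resulting relation $|P_{m+d}| = |P_m|$ with the growth of rank sizes. Two caveats on that part. First, your growth estimate as stated is insufficient: with $b = 2$ the bound $b\,|P_n| \leq 2\,|P_{n+1}|$ permits eventually constant rank sizes, and the appeal to the meet-semilattice structure is not an argument; fortunately every configuration you consider contains at least three atoms, so $b \geq 3$ and $|P_{n+1}| \geq \tfrac{3}{2}|P_n|$ is available and should be what you invoke. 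Second, the topological step you flag as the main obstacle is in fact fine when $\delta \not\leq v_1$ and $\delta \not\leq v_2$: the two bounding chains meet only at $\hat{0}$ because $v_1 \wedge v_2 = \hat{0}$, so the curve is simple, and an exit through any boundary vertex other than $v$ forces $\delta \leq v_1$ or $\delta \leq v_2$. The problem is not that step but the nonexistence of a suitable $\delta$ in the multi-atom case.
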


\begin{proof}
Let $v_1$ and $v_2$ be on rank $k$ of some poset $P$ with up-degree $b$. Note that the meet of $v_1$ and $v_2$ must be $\hat{0}$, so the two vertices cannot cover the same atom. Furthermore, by definition, $v_1$ and $v_2$ must be adjacent in the planar representation of the poset. Suppose by contradiction that $v_1$ and $v_2$ are greater than non-adjacent vertices. Then consider the chain $c$ beginning any atom between the two and consisting of a series of vertices that are neither the rightmost nor the leftmost covering the previous vertex. Note that this is possible because in order for such an atom to exist, the poset has to have up-degree of at least $3$. None of these vertices can be \textit{bifurcated} without contradicting the planarity conditions, as demonstrated by the vertex labelled $x$ in Figure~\ref{fig: adj atoms diagram}. Thus, $c$ cannot intersect either of the chains connecting $v_1$ and $v_2$ to $\hat{0}$. Because the two chains begin on opposite sides of $c$, the vertex belonging to rank $k$ of this chain must be between $v_1$ and $v_2$, a contradiction. Thus, $v_1$ and $v_2$ must cover adjacent atoms.

We now prove that $v_1$ and $v_2$ must each be greater than exactly one atom. Without loss of generality, let $v_1$ be greater than $y_1$ and $y_2$ and let $v_2$ be greater than $y_3$ such that $y_1$ is to the left of $y_2$ and $y_2$ is to the left of $y_3$. Consider a chain $c'$ with the same conditions as before beginning with $y_2$. Again, the vertices of this chain cannot be bifurcated because this would contradict the planarity conditions. However, the chains connecting $v_1$ to $y_1$ and $v_2$ to $y_3$ are on opposite sides of $c'$. Thus, the vertex on rank $k$ of $c$ must lie between $v_1$ and $v_2$, and we obtain a contradiction.
\end{proof}

\begin{figure}[h!]
 \centering
 \includegraphics[width = 100pt]{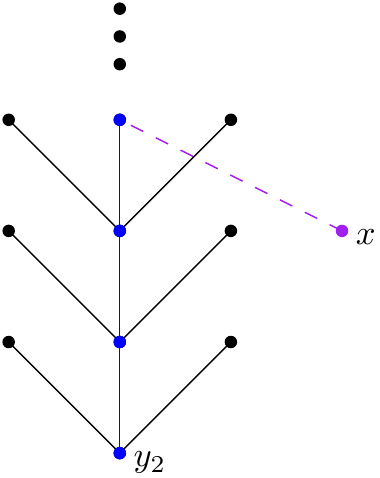}
 \caption{Diagram for Lemma~\ref{lem: adj atoms}. The bottom segment of chain $c$ is shown in blue.}
 \label{fig: adj atoms diagram}
\end{figure}

This implies the following key lemma.

\begin{lemma}\label{lem: b-1 bifurcated vertices}
There are at most $b-1$ \textit{root-bifurcated} vertices in a planar upho poset of up-degree $b$.
\end{lemma}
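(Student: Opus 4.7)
The plan is to define an injective map from the set of root-bifurcated vertices into the set of adjacent pairs of atoms in the planar Hasse diagram of $P$, and conclude by counting. Since $\hat{0}$ has up-degree $b$, the planar diagram contains exactly $b$ atoms $a_1,\ldots,a_b$ arranged horizontally, producing $b-1$ adjacent pairs $(a_i,a_{i+1})$.

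First I would, for any root-bifurcated vertex $v$ covering $v_1,v_2$ with $v_1$ to the left of $v_2$, invoke Lemma~\ref{lem: adj atoms} to obtain a unique index $i$ such that $v_1$ lies above $a_i$ alone and $v_2$ lies above $a_{i+1}$ alone. I would assign to $v$ the pair $(a_i,a_{i+1})$, and the crux of the proof becomes showing that this assignment is injective.

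To prove injectivity, I would suppose for contradiction that $v \neq v'$ are both root-bifurcated with the same assigned pair, and without loss of generality take $\rho(v) \leq \rho(v')$. By Lemma~\ref{lem:meet semilattice} the poset is a meet semilattice, so the meet $u := v \wedge v'$ exists. Since $a_i$ and $a_{i+1}$ lie below both $v$ and $v'$, we have $a_i,a_{i+1} \leq u$. Because $v$ covers only $v_1$ and $v_2$, every element strictly below $v$ is $\leq v_1$ or $\leq v_2$, so either $u = v$, or $u \leq v_1$, or $u \leq v_2$. The latter two cases immediately contradict the root-bifurcation condition, since they force $v_1 \geq a_{i+1}$ or $v_2 \geq a_i$, violating the fact that $v_1$ and $v_2$ each lie above exactly one atom.

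The hard part is handling the remaining case $u = v$, which forces $v \leq v'$ and hence requires one to bring in the structure below $v'$ rather than just below $v$. Here I would apply the analogous covering decomposition at $v'$: since $v \leq v'$ with $v \neq v'$, one of $v \leq v_1'$ or $v \leq v_2'$ must hold, and because $v$ lies above both $a_i$ and $a_{i+1}$, this forces $v_1'$ or $v_2'$ to lie above both atoms, again contradicting the root-bifurcation condition for $v'$. Once injectivity is established, the lemma is immediate: the number of root-bifurcated vertices is at most the number of adjacent pairs of atoms, which equals $b-1$.
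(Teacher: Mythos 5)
Your proof is correct, and it handles the key step---injectivity of the assignment of a root-bifurcated vertex to its pair of adjacent atoms---by a genuinely different route than the paper. The paper argues injectivity geometrically: it places two root-bifurcated vertices over the same pair of atoms into the configuration of the leftmost diagram of Figure~\ref{fig: planar lemma} and derives a contradiction from chains being forced to cross in the planar embedding (implicitly treating the two vertices as lying on the same rank). You instead argue order-theoretically: both atoms $a_i, a_{i+1}$ lie below the meet $u = v \wedge v'$ guaranteed by Lemma~\ref{lem:meet semilattice}, and since a root-bifurcated vertex covers exactly two elements, any element strictly below it sits below one of those two covers; in every branch of the resulting case analysis ($u \leq v_1$, $u \leq v_2$, or $u = v$ followed by the same decomposition at $v'$) some cover of $v$ or $v'$ ends up above both atoms, contradicting the ``exactly one atom'' conclusion of Lemma~\ref{lem: adj atoms}. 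Your version buys two things: it makes no further appeal to the planar picture beyond what is already encapsulated in Lemmas~\ref{lem:meet semilattice} and~\ref{lem: adj atoms}, and it works without assuming the two root-bifurcated vertices lie on the same rank, so it is both more robust and easier to check than the paper's crossing argument. The final count---$b$ atoms, hence $b-1$ adjacent pairs---is the same in both proofs.
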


\begin{proof}
By Lemma~\ref{lem: adj atoms}, any \textit{root-bifurcated} vertex must cover two vertices each greater than exactly one atom, where those atoms are adjacent. We claim that we cannot have two \textit{root-bifurcated} vertices $v_1$ and $v_2$ that are greater than the same pair of atoms, named $v_3$ and $v_4$. Because $v_1$ and $v_2$ lie on the same rank, and $v_3$ and $v_4$ lie on the same rank, we actually arrive at a special case of the leftmost diagram of Figure~\ref{fig: planar lemma}. Due to planarity, the chains must intersect at a vertex, but this implies that some of the vertices covered by $v_1$ and $v_2$ are no longer each greater than exactly one atom. Thus, we have arrived at a contradiction, so it is not possible for two \textit{root-bifurcated} vertices to both be greater than the same pair of atoms. Then, because there are $b-1$ pairs of adjacent atoms in the planar Hasse diagram, we can only have $b-1$ \textit{root-bifurcated} vertices.
\end{proof}

Denote by $a_i$ the number of \textit{root-bifurcated} vertices on rank $i$ where $\sum a_i \leq b-1$. Finally, we are ready to state our theorem. 

\begin{theorem}\label{thm:planar-rgf}
The rank-generating function of any planar poset $P$ with up-degree $b$ is of the form $Q(x)^{-1}$ where $Q(x)=1-bx+a_2x^2+a_3x^3+ \cdots +a_nx^n$ such that $b, a_1, a_2, \ldots, a_n \in \mathbb{Z}_{\geq{0}}$ where the coefficients sum to at most $0$, and $a_i$ refers to the number of \textit{root-bifurcated} vertices on rank $i$.
\end{theorem}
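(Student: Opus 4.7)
The plan is to derive a recurrence for $p_k := |P_k|$ by double-counting cover relations between adjacent ranks, and to express the correction coming from bifurcated vertices as a convolution with the root-bifurcated counts $a_j$. First, I would count cover relations between $P_{k-1}$ and $P_k$ in two ways: from below, each of the $p_{k-1}$ vertices has up-degree exactly $b$ by the upho hypothesis, contributing $b\,p_{k-1}$ cover relations; from above, each element of $P_k$ covers either $1$ or $2$ elements (Lemma~\ref{lem: cover 2 vertices}), and the ones covering $2$ are precisely the bifurcated or root-bifurcated vertices (Lemma~\ref{lem: bifurcated}). Writing $c_k$ for the number of bifurcated vertices on rank $k$, this gives
$$b\,p_{k-1} \;=\; p_k + a_k + c_k.$$

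The heart of the argument is to show that for each $n\geq 1$,
$$a_n + c_n \;=\; \sum_{j=2}^{n} a_j\, p_{n-j},$$
via the bijection
$$\{\text{bifurcated or root-bifurcated } v \in P_n\} \;\longleftrightarrow\; \bigsqcup_{k=0}^{n-2}\, P_k \times \{\text{root-bifurcated vertices of } V_w \text{ on rank } n-k\},$$
where the inner set on the right has size $a_{n-k}$ because $V_w \cong P$. Given bifurcated-or-root-bifurcated $v\in P_n$ covering $u_1,u_2$, the forward map sends it to the pair $(w,v)$ with $w:=u_1\wedge u_2$, which exists uniquely by Lemma~\ref{lem:meet semilattice}; here $w=\hat0$ precisely when $v$ is root-bifurcated. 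Since $u_1,u_2\in V_w$ and meets in the filter $V_w$ agree with meets in $P$ (any common lower bound $m$ of $u_1,u_2$ in $P$ satisfies $m\geq w$, hence lies in $V_w$), the vertex $v$ covers exactly $u_1,u_2$ in $V_w$ with meet $w$, so $v$ is root-bifurcated in $V_w$. The inverse sends $(w,v')$ with $v'$ root-bifurcated in $V_w$ to $v'\in P$: the two $V_w$-covers of $v'$ are also its $P$-covers by the earlier observation about cover relations in filters, and $v'$ has no further covers in $P$ by Lemma~\ref{lem: cover 2 vertices}; its meet in $P$ is $w$, so $v'$ is root-bifurcated or bifurcated in $P$ accordingly.

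Combining the two identities gives the recurrence
$$p_n \;=\; b\,p_{n-1} \;-\; \sum_{j=2}^{n} a_j\, p_{n-j} \qquad (n\geq 1),$$
which, together with $p_0 = 1$, is exactly the statement that $F_P(x)\cdot Q(x) = 1$ for $Q(x) = 1-bx+\sum_{j\geq 2} a_j x^j$. Finally, Lemma~\ref{lem: b-1 bifurcated vertices} gives $\sum_{j\geq 2} a_j \leq b-1$, so only finitely many $a_j$ are nonzero (making $Q$ a polynomial), and the sum of coefficients of $Q$ is $1-b+\sum_{j\geq 2} a_j \leq 0$, as claimed.

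The main obstacle is verifying the bijection in the second step, specifically showing that the notions of ``bifurcated'' and ``root-bifurcated'' transfer cleanly between $P$ and the filter $V_w$. The subtle points are (i) that meets in $V_w$ and in $P$ coincide, (ii) that cover relations restrict correctly between $P$ and $V_w$ so a $V_w$-root-bifurcated vertex cannot acquire a third cover in $P$ outside of $V_w$, and (iii) that the isomorphism $V_w \cong P$ transports root-bifurcated vertices to root-bifurcated vertices, which follows because in a planar upho poset the property of being root-bifurcated is equivalent to covering exactly two elements with meet equal to the minimum (the adjacency requirement of Definition~\ref{def: root-bifurcated} is automatic by the argument in the proof of Lemma~\ref{lem: bifurcated}). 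Once these structural transfers are checked, the rest of the proof is a straightforward translation of the recurrence into generating function form.
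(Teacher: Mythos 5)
Your proposal is correct and follows essentially the same route as the paper: both arguments double-count the edges between consecutive ranks, classify each vertex covering two elements by the rank of the meet of its covered pair, use the upho isomorphism $V_w\cong P$ to count those with meet $w$ as $a_{n-\rho(w)}$, obtain the convolution recurrence $p_n=b\,p_{n-1}-\sum_j a_j p_{n-j}$, and invoke Lemma~\ref{lem: b-1 bifurcated vertices} for the condition $Q(1)\leq 0$ (your treatment of the transfer of the root-bifurcated property to $V_w$ is in fact more careful than the paper's). One small slip: a common lower bound $m$ of $u_1,u_2$ satisfies $m\leq w$, not $m\geq w$; the correct justification is that the only common lower bound of $u_1,u_2$ lying in $V_w$ is $w$ itself, so their meet in $V_w$ is the minimum of $V_w$, which is what you need.
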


\begin{proof} 
Let $P$ be a planar upho poset with up-degree $b$. We count the number of vertices on rank $i$, denoted $r_i$, as follows. 

We begin with an upper bound of $b\cdot r_{i-1}$ vertices because each vertex on rank $i-1$ has up-degree $b$. This is precisely the number of edges between rank $i-1$ and rank $i$, but it overcounts the vertices on rank $i$ which cover multiple vertices on rank $i-1$. By Lemma~\ref{lem: bifurcated}, all such vertices cover exactly $2$ other vertices and are either \textit{bifurcated} or \textit{root-bifurcated}. Thus, we can calculate the number of vertices on rank $i$ by subtracting the number of \textit{bifurcated} and \textit{root-bifurcated} vertices on rank $i$ from $b\cdot r_{i-1}$.

Any \textit{bifurcated} vertex $v$ on rank $i$ covers two vertices on rank $i-1$ with meet on rank $i-k$ for some $k<i$. In fact, given $k$, we can precisely count the number of such vertices $v$. Consider the poset $V_w$ above any vertex $w$ on rank $i-k$. Then because of the upho conditions, there must be exactly $a_k$ \textit{root-bifurcated} vertices on rank $k$ of the subposet $V_w$. This implies that there are $a_k \cdot r_{i-k}$ \textit{bifurcated} vertices $v$ on rank $i$ of the original poset such that the meet of the vertices covered by $v$ is on rank $i-k$. Note that we are not overcounting because every pair of vertices has a \textit{unique} meet. Lastly, by definition, if we let $k=i$ then the number of \textit{root-bifurcated} vertices on rank $i$ is $a_i$.

We can sum over all $k$ to obtain the total number of vertices on rank $i$ that cover multiple other vertices: $\sum_{k=1}^i r_{i-k} \cdot a_k$. We subtract this value from $b \cdot r_{i-1}$ and obtain $r_i = b \cdot r_{i-1} - \sum_{k=1}^i r_{i-k} \cdot a_k$.

To prove that the closed form of our rank-generating function is $$\frac{1}{1-bx+a_2x^2+a_3x^3+a_4x^4+\cdots},$$ we show that
$$(1-bx+a_2x^2+a_3x^3+\cdots)(r_0+r_1x+r_2x^2+r_3x^3 + \cdots ) = 1.$$

When the left side of the equation is expanded, the constant term is trivially $1$. Then the coefficient of any $x^i$ for $i>0$ is $r_i - b\cdot r_{i-1} + \sum_{k=1}^i a_{i-k} \cdot r_k = 0$, as desired. Note that the coefficients of the denominator of the rank-generating function sum to at most $0$ as a consequence of Lemma~\ref{lem: b-1 bifurcated vertices}, and our proof is complete.
\end{proof}

In fact, we can extend the ideas from this proof to show that there exists a planar upho poset for any rank-generating function of this form. 

\begin{theorem}\label{thm:planar-construction}
Given any $Q(x)=1-bx+a_2x^2+a_3x^3+ \cdots +a_nx^n$ such that $b, a_1, a_2, \ldots, a_n \in \mathbb{Z}_{\geq{0}}$ and the coefficients sum to at most $0$, there exists at least one planar upho poset $P$ with rank-generating function $Q(x)^{-1}$.
\end{theorem}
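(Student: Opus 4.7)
The plan is to construct $P$ as a self-similar quotient of the $b$-ary tree, so that both the upho property and planarity are built in. Write $c:=a_2+\cdots+a_n$; since $c\leq b-1$ we may fix a function $\sigma\colon\{1,\dots,c\}\to\{2,\dots,n\}$ with $|\sigma^{-1}(i)|=a_i$ for each $i$. We interpret $j\in\{1,\dots,c\}$ as the index of the $j$-th gap among the $b-1$ gaps between adjacent atoms of $P$, and $\sigma(j)$ as the rank at which a root-bifurcated vertex is to be placed in that gap.

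Let $T$ be the planar $b$-ary tree on $\Sigma^{*}$, where $\Sigma=\{1,\dots,b\}$, rooted at the empty word $\epsilon$ with cover relations $w\lessdot ws$. Let $\sim$ be the smallest equivalence relation on $\Sigma^{*}$ that is closed under right concatenation ($x\sim y\Rightarrow xs\sim ys$ for all $s\in\Sigma$) and that contains the generating pairs
\[
w\cdot j\cdot b^{\sigma(j)-1}\ \sim\ w\cdot(j+1)\cdot 1^{\sigma(j)-1},\qquad w\in\Sigma^{*},\ j\in\{1,\dots,c\},
\]
where $b^{k}$ and $1^{k}$ denote the constant words of length $k$. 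Define $P:=T/{\sim}$, with cover relations induced by $[w]\lessdot[ws]$ and the rank grading inherited from the common length of a class (well defined since every generating pair preserves length).

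The verification splits into three parts. \emph{Planarity}: the two sides of each generating pair are the rightmost and leftmost descendants at depth $\sigma(j)-1$ of the adjacent children $w\cdot j$ and $w\cdot(j+1)$, so the identifications respect the natural planar embedding of $T$. \emph{Upho}: since the generating family is indexed by arbitrary prefixes $w$, the relation $\sim$ is left-translation-invariant, and the prepend-by-$u$ map $[v]\mapsto[uv]$ descends to an order isomorphism $P\to V_{[u]}$ for each $u\in\Sigma^{*}$. \emph{Combinatorial count}: each $[w]$ is covered by the $b$ distinct classes $[w\cdot s]$, each class has down-degree $1$ or $2$ (with $2$ occurring precisely on classes containing both sides of some generating pair, possibly extended by congruence), and at rank $i$ the root-bifurcated vertices are the classes of the generating pairs based at $w=\epsilon$ with $\sigma(j)=i$, totalling $a_i$. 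Combined with Theorem~\ref{thm:planar-rgf}, this yields $F_P(x)=Q(x)^{-1}$.

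The main obstacle is the combinatorial count, in particular showing that $\sim$ does not collapse the tree too far: distinct children $[ws_1]\neq[ws_2]$ must remain distinct, and although an equivalence class can have size greater than $2$ (through chains of congruence-extensions of several lower-rank pairs), its tree parents must still partition into at most two equivalence classes so that the down-degree stays at most $2$. I would handle this by induction on rank, using that at each rank the generating pairs $(w,j)$ with $|w|+\sigma(j)=i$ are pairwise distinguishable by parsing the all-$b$ or all-$1$ tail together with the pivot letter, and that any congruence-extension of a lower-rank pair merges two cousins whose tree parents were already merged one rank below.
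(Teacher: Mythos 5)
Your quotient-of-the-$b$-ary-tree construction is a genuinely different packaging from the paper's proof, which builds $P$ rank by rank, at each stage first replicating inside every filter $V_v$ the root-bifurcated vertices already present at the corresponding relative rank and then merging $a_i$ fresh adjacent pairs; your generating pairs indexed by all prefixes $w$ encode exactly those replications, so the two constructions should produce isomorphic posets. The trade-off is that the paper's iterative gluing makes the recursion $r_i = b\,r_{i-1} - \sum_k a_k r_{i-k}$ hold essentially by construction and spends its effort on the upho property, whereas your quotient makes the self-similarity structurally visible but shifts the entire burden onto controlling the congruence $\sim$.

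That is where the proposal has a genuine gap, and you have correctly located it yourself: the claim that $\sim$ does not over-collapse --- that distinct children $[ws_1]\neq[ws_2]$ stay distinct, that the tree-parents of each class fall into at most two classes, and that the number of identifications at rank $i$ is exactly $\sum_k a_k r_{i-k}$ --- is only sketched, and it is not a routine check. Congruence classes genuinely grow beyond single generating pairs: for instance with $b=3$, $\sigma$ sending gap $1$ to rank $2$ and gap $2$ to rank $3$, one already gets the size-three class $\{133,\,213,\,221\}$ at rank $3$ by chaining the right-extension of $13\sim 21$ with the new pair $213\sim 221$. So one needs a confluence or normal-form argument for this prefix-rewriting system to rule out longer chains that merge siblings, push a down-degree above $2$, or identify non-adjacent vertices (which would also break your planarity claim). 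Relatedly, the upho step is incomplete as stated: left-translation-invariance only gives that $[v]\mapsto[uv]$ is a well-defined, rank- and cover-preserving surjection onto $V_{[u]}$; injectivity is the left-cancellation property $uv\sim uv'\Rightarrow v\sim v'$, which is precisely the nontrivial hypothesis of the paper's lemma on monoid posets in Section~\ref{sec: irrationality} and must either be proved for these specific relations or deduced from the rank counts once those are established. Until that rewriting analysis is carried out, what you have is a plausible program rather than a proof.
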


\begin{proof}
We construct an upho poset of up-degree $b$ with $a_i$ \textit{root-bifurcated} vertices on rank $i$. As shown in the proof of Theorem~\ref{thm:planar-rgf}, such a poset has the desired rank-generating function.

We begin with one vertex on rank $0$ and $b$ vertices on rank $1$ and construct upwards.
When we have constructed the poset up to rank $i-1$, we draw $b$ vertices on rank $i$ covering each vertex on rank $i-1$. We  combine pairs of adjacent vertices $v_1$ and $v_2$ on rank $i$ covering $s_1$ and $s_2$ on rank $i-1$ respectively. We do this by replacing $v_1$ and $v_2$ by a single vertex $v_3$, placed between the two original vertices in the planar representation, that covers both $s_1$ and $s_2$. See steps $2$ and $3$ of Figure~\ref{fig: planar} for an example. Note that $v_3$ is either a \textit{bifurcated} or \textit{root-bifurcated}. In order to choose which vertices to combine in order to satisfy both the desired rank-generating function and the upho conditions, we perform the following process.

Consider all subposets $V_v$ above some vertex $v$ on rank $0<j<i$ of the poset. Within rank $i-j$ of this subposet, we add in all of the vertices isomorphic to the \textit{root-bifurcated} vertices on rank $i-j$ of $P$. Specifically, if there exists a \textit{root-bifurcated} vertex greater than the $x$ and $(x+1)^{\text{st}}$ atoms on rank $1$ (counting from the left), then we combine adjacent vertices above the $x$ and $(x+1)^{\text{st}}$ atoms of the subposet. We perform this process for all such subposets $V_v$. Finally, we combine vertices to form $a_i$ \textit{root-bifurcated} vertices wherever possible. An example for the rank-generating function $(1-3x+x^2+x^3)^{-1}$ can be seen in Figure~\ref{fig: planar}.

To prove that this poset is upho, consider the subposet $V_w$ above some vertex $w$ on rank $k$ of the poset. Notice that the first two ranks of $V_w$ are trivially isomorphic to the first two ranks of $P$. Furthermore, when we construct rank $i$ of $P$ (for $i>w$), we consider every subposet $V_z$ for $z$ in $V_w$ and combine vertices as described above. Notice that this process is identical to the process we performed when constructing rank $i-w$ of $P$. By induction, this implies that $V_w \cong P$. Thus, the poset is upho, as desired.
\end{proof}

\begin{figure}
 \centering
 \includegraphics[width = 430pt, height=540pt]{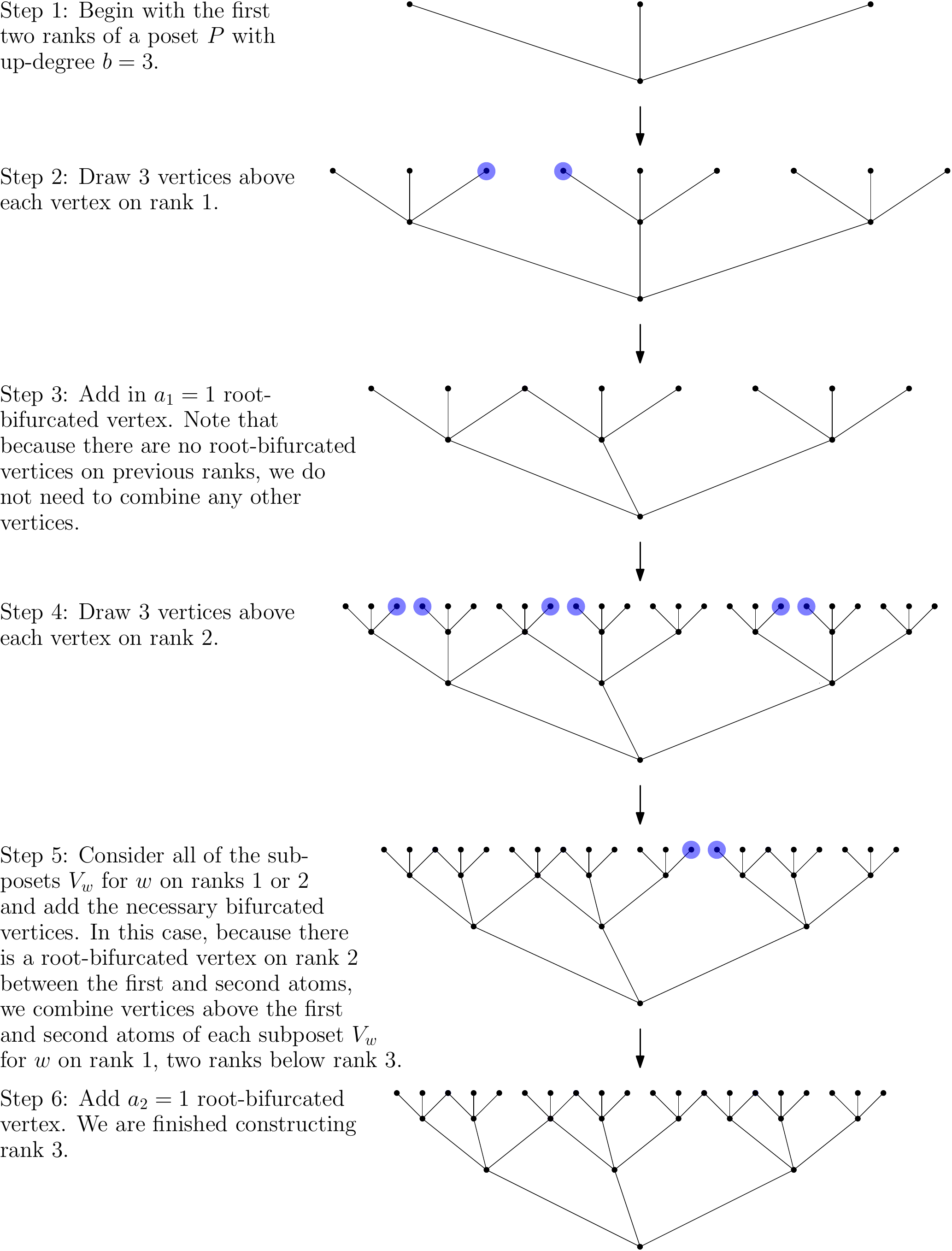}
 \caption{Example of the construction for Theorem~\ref{thm:planar-construction} for rank-generating function $(1-3x+x^2+x^3)^{-1}$. Vertices that are combined in the following step are highlighted in blue.}
 \label{fig: planar}
\end{figure}

\section{Upho posets with uncomputable rank-generating function}\label{sec: irrationality}

So far, we have focused on upho posets with rational rank-generating functions. Thus, a natural question arises: does there exist an upho poset with an uncomputable rank-generating function?

We take a different approach than we have in the previous sections, in that rather than directly constructing a poset satisfying the desired criteria, we prove the existence of an uncountable set of upho posets with distinct rank-generating function, and show that at least one poset in the set must have uncomputable rank-generating function. Furthermore, we no longer depend on the Hasse diagram representation of the poset. Instead, we construct our posets from strings of letters.

Consider a vocabulary of symbols $\Sigma$ and a possibly infinite set of homogeneous relations of the form $b_1b_2\ldots b_k = c_1c_2\ldots c_k$, where $b_i,c_i \in \Sigma$ for all $1 \leq i \leq k$. We define a monoid $M$ consisting of every finite string of letters from the alphabet with the set of relations under the binary operation of concatenation. It is clear that concatenation is associative and the identity element is the empty string. We then let $P(M)$ be the associated poset consisting of elements of the monoid, where $X$ covers $Y$ if $X=Ya_i$ for some $a_i \in \Sigma$ and $AB$ represents concatenation of the strings $A$ and $B$.

\begin{example} Suppose we define a monoid with alphabet $a$, $b$, and $c$ with relations $ac = ba$ and $bc = ca$. In this case, the vertex represented by both $ac$ and $ba$ covers the element represented by $a$ and the element represented by $b$. Likewise, the vertex represented by $bc$ and $ca$ covers both the vertex represented by $b$ and the vertex represented by $a$.
\end{example}

We define some strings to be equal as follows. Firstly, if two strings are equivalent in the monoid, they represent the same element in our poset. We do not allow equalities involving strings of length $1$ (as this is equivalent to decreasing the size of the alphabet). Furthermore, if $A$, $B$, $X$, and $Y$ are strings, then $A = B$ implies $XAY = XBY$. That is, the strings $XAY$ and $XBY$ are associated with the same vertex in the associated poset. For the remainder of this section, we will refer to the relationship between $XAY$ and $XBY$ as a \textit{string substitution}.

Note that as in Section~\ref{sec: planar}, we are describing upho posets using the vertices that cover multiple others. Furthermore, the set of equalities defining a poset will uniquely determine its rank-generating function.

\begin{remark}
This is not a method to construct \textit{all} upho posets.
\end{remark}

We provide some examples of posets defined as described.

\begin{example}
A full binary tree, shown in Figure~\ref{fig: future work} is uniquely described by an alphabet of size $2$ and no relations.
\end{example}
\begin{example}
The Stern Poset, also shown in Figure~\ref{fig: future work}, is uniquely described by an alphabet of size $3$ and the relations $ac = ba$ and $bc = ca$. 
\end{example}

\begin{figure}[h!]
 \centering
 \includegraphics[width = 470pt]{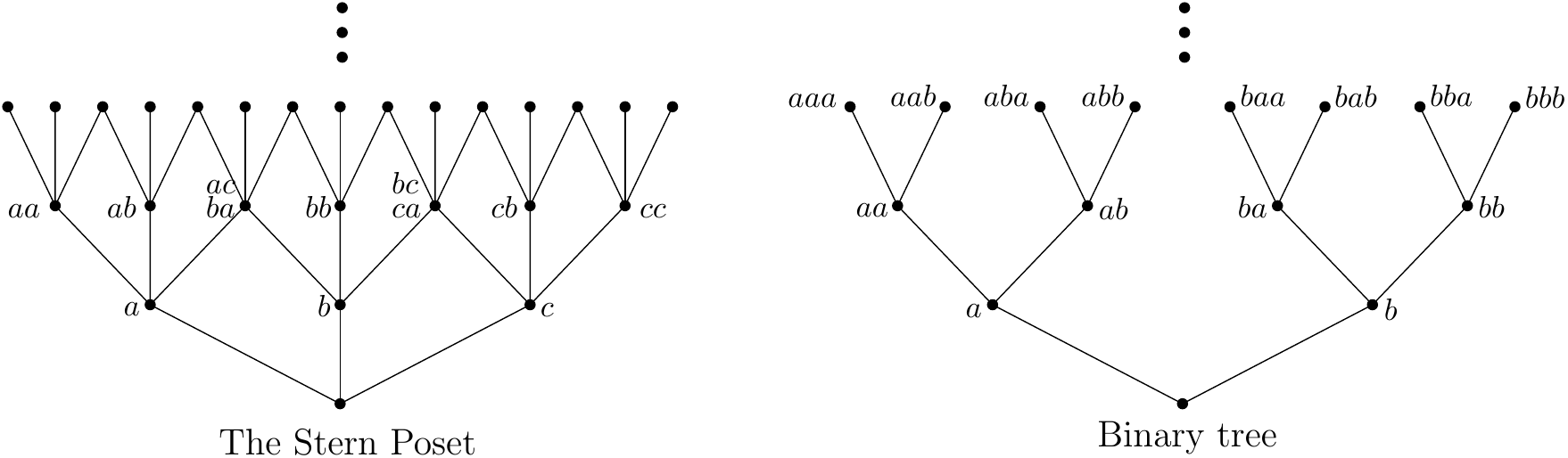}
 \caption{Example of labelled vertices. Due to space constraints, the labels on rank $3$ of the Stern Poset have been omitted, so from left to right they are: $aaa$, $aab$, $aba = aac$, $abb$, $baa = aca = abc$, $bab = acb$, $bba = bac = acc$, $bbb$, $caa = bca = bbc$, $cab = bcb$, $cba = cac = bcc$, $cbb$, $cca = cbc$, $ccb$, and $ccc$.}
 \label{fig: future work}
\end{figure}

We wish to reduce the upho conditions for posets described by such relations. In particular, we note that a poset is upho if $AX = AY$ implies $X = Y$. Intuitively, if the string $A$ is associated with some vertex $v$, then removing $A$ from all vertices greater than or equal to $v$ creates a map $\iota: V_v \rightarrow P$. Then, $AX = AY$ implies $X = Y$, the relations are preserved under $\iota$, so we claim that $\iota$ is an isomorphism. (Note that this condition is not equal to, but implies the upho conditions). We state this formally as follows.

\begin{lemma}
Given a monoid $M$, the associated poset $P(M)$ is upho if $AX = AY$ implies $X = Y$ for all $A,X,Y \in M$.
\end{lemma}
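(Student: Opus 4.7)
The plan is to construct, for each vertex $v \in P(M)$, an explicit isomorphism $\iota_v : V_{P(M),v} \to P(M)$ by ``stripping a prefix representing $v$''. Concretely, I would pick any string $A$ representing $v$; any $w \geq v$ can then be written as $w = AX$ for some string $X$, and I would set $\iota_v(w) := X$, viewed as an element of $M$. The whole argument reduces to checking that $\iota_v$ is well-defined on equivalence classes, bijective, and preserves cover relations, and the left-cancellation hypothesis $AX = AY \Rightarrow X = Y$ is exactly what is needed for well-definedness.

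First I would record that homogeneity of the defining relations makes $P(M)$ ranked by string length, and each rank is finite because the alphabet $\Sigma$ is finite. This handles the finite-type requirement of Definition~\ref{def: upho} and will also make the cover analysis clean, since covers will be exactly given by appending a single letter.

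Next I would verify that $\iota_v$ is well-defined and bijective. If the vertex $w \geq v$ admits two presentations $AX$ and $AX'$, then $AX = AX'$ in $M$, and the hypothesis forces $X = X'$ in $M$; so $\iota_v(w)$ is a well-defined element of $P(M)$. Injectivity is the same argument read in reverse, and surjectivity is immediate because the ``prepend-$A$'' map $X \mapsto AX$ is a one-sided inverse landing inside $V_v$.

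Finally I would check cover preservation in both directions. Because every relation is homogeneous, covers in $P(M)$ are exactly the pairs $(Y, Ya)$ with $a \in \Sigma$. Thus if $w_1 \lessdot w_2$ in $V_v$ with $w_i = AX_i$, then $AX_2 = AX_1 a$ in $M$ for some letter $a$, and another application of cancellation gives $X_2 = X_1 a$, so $\iota_v(w_1) \lessdot \iota_v(w_2)$; the inverse map $X \mapsto AX$ manifestly sends $X \lessdot Xa$ to $AX \lessdot AXa$. Combined with the bijection above, this gives $V_{P(M),v} \cong P(M)$ and hence that $P(M)$ is upho. The main subtlety I anticipate is keeping the distinction between a formal string and its equivalence class in $M$ crisp: the cover-preservation step implicitly uses that any cover in $P(M)$ can be witnessed by appending a single letter to \emph{some} representative of the lower vertex, which is valid precisely because the defining relations preserve length.
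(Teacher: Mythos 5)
Your proposal is correct and follows essentially the same approach as the paper: both define the prefix-stripping map $\iota : BX \mapsto X$ on $V_{P(M),B}$ and verify it is an order isomorphism. Your write-up is in fact more careful than the paper's terse version, since you explicitly isolate where left-cancellation is used (well-definedness and injectivity on equivalence classes) and where homogeneity is used (ranking by length and the characterization of covers as single-letter appends).
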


\begin{proof}
We wish to show that, if $P$ satisfies the given criteria, then for any string $B \in P$, $V_B \equiv P$. Note that any element of $V_B$ begins with $B$, so it is sufficient to show that the map $\iota: BX \rightarrow X$ is an isomorphism. To do this, first note that $BX \leq BY$ if and only if $X \leq Y$, which implies that $\iota$ is a homomorphism. Also, $\iota$ is clearly a bijection, so $\iota$ is an isomorphism and we are done.
\end{proof}

In order to prove the existence of a poset with uncomputable rank-generating function, we begin by defining the following class of infinite posets. Consider the alphabet $\{R, L\}$ with the following set $S$ of relations.

\begin{align*}
    LRLRLL &= RRLLRL \\
    LRLRLRLL &= RRLLLLRL \\
    LRLRLRLRLL &= RRLLLLLLRL \\
       & \vdotswithin{ = }\notag \\
\end{align*}

In particular, for every $n \geq 2$, $S$ contains a relation between a string consisting of $n$ copies of $LR$ followed by $LL$ and a string consisting of $RR$ followed by a string of $2 \cdot (n-1)$ copies of $L$, and $RL$. For each $n \geq 2$, call the associated relation $t_n$. This means that $S=\{t_2,t_3,\ldots\}$. Note that the left and right sides of $t_n$ will always be of the same length, namely $2n+2$.

We note the following.

\begin{lemma} \label{irrational-upho}
Every poset $P$ defined using the alphabet $\{L, R\}$ and the relations in an element of $\mathcal{P}(S)$ is upho.
\end{lemma}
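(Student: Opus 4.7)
By the preceding lemma, it suffices to show that the monoid $M$ defined by any chosen subset of the relations $S$ is left-cancellative: $AX = AY$ in $M$ implies $X = Y$ for all $A, X, Y \in M$. A standard induction on $|A|$, peeling off the first letter of $A$ at each step and using that $[A X] = [a] \cdot [A'' X]$ when $A = a \cdot A''$, reduces this to single-letter left cancellation: for each $a \in \{L, R\}$, if $aX = aY$ in $M$ then $X = Y$.

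For the single-letter case, I would fix a derivation $aX = w_0 \to w_1 \to \cdots \to w_N = aY$ and classify each step as \emph{position-$0$} (its substitution window starts at position $0$) or \emph{interior} (window starts at position $\geq 1$). Because the LHS of every $t_n$ begins with $L$ while the RHS begins with $R$, each position-$0$ step flips the first character; since $w_0$ and $w_N$ both begin with $a$, the position-$0$ steps must occur an even number of times. If none occur, every step preserves position $0$, and stripping the leading $a$ from each $w_i$ yields a derivation $X \to \cdots \to Y$, giving $X = Y$. Otherwise, I would induct on the number of position-$0$ steps, showing that any consecutive pair---a forward $t_n$ at position $0$ followed by a reverse $t_m$ at position $0$ with only interior steps in between---can be replaced by interior steps alone. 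Letting $u = (LR)^n LL \cdot s$ denote the word just before the forward step and $v = (LR)^m LL \cdot s'$ the word just after the reverse step, the intermediate interior derivation takes $RR \, L^{2n-2} RL \cdot s$ to $RR \, L^{2m-2} RL \cdot s'$; stripping the (untouched) leading $R$ yields the monoid equality $R \, L^{2n-2} RL \cdot s = R \, L^{2m-2} RL \cdot s'$, and this must be converted into an interior-only derivation from $u$ to $v$.

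The main obstacle is this conversion. The characters of $u$ at positions $1, \ldots, 2n+1$ differ from those of $RR \, L^{2n-2} RL \cdot s$, and when $n \neq m$ the relation windows at the left ends of $u$ and $v$ even have different lengths, forcing the replacement derivation to handle a shift in where the relation window ends and the tail begins. To overcome this, I would exploit that $u$ itself admits interior forward moves: when $n \geq 3$, applying $t_{n-1}$ at position $2$ of $u$ produces $LR \cdot RR \, L^{2n-4} RL \cdot s$, an L-starting word already equal in $M$ to the R-starting $RR \, L^{2n-2} RL \cdot s$. Stringing together such moves and invoking other relations $t_k$ in the chosen subset to bridge length discrepancies should allow us to mirror the intermediate R-starting derivation on the L-starting side; the most delicate case $n \neq m$ requires detailed analysis of how interior substitutions interact with the left-end relation window, which I expect to be the hardest part of the proof.
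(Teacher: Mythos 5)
Your reduction to single-letter left cancellation (inducting on $|A|$ and invoking the preceding lemma) matches the paper exactly, and your classification of derivation steps into position-$0$ and interior steps is a reasonable framing. But the proof is not complete: the entire content of the lemma lives in the step you defer, namely converting the interior derivation between the $R$-starting words $RR\,L^{2n-2}RL\cdot s$ and $RR\,L^{2m-2}RL\cdot s'$ into an interior-only derivation between $u=(LR)^nLL\cdot s$ and $v=(LR)^mLL\cdot s'$. You acknowledge this yourself (``I expect to be the hardest part''), and the bridging device you propose --- applying $t_{n-1}$ at position $2$ of $u$ and ``mirroring'' the $R$-side derivation --- is not shown to terminate, nor even to be well defined when the intermediate interior steps overlap the prefix window $RR\,L^{2n-2}RL$, which is precisely the interaction that has to be controlled. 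As written, the induction on the number of position-$0$ steps has no proof of its key reduction step.

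The paper closes this gap by a different and more direct argument: it shows that a position-$0$ step can never be undone. After applying $t_n$ at the front, the word begins $RR\,L^{2n-2}RL\cdots$, and the only two ways a later substitution can overlap this prefix produce either an odd-length run of $L$'s immediately after the leading $RR$ or a block of three consecutive $R$'s; neither pattern occurs in either side of any $t_k$, so no further substitution can reach back into the prefix. Hence every subsequent substitution acts strictly to the right of the previous one, the leading $R$ persists forever, and the assumption that the first letter is ever modified yields an outright contradiction --- no pairing or cancellation of position-$0$ steps is needed. To salvage your approach you would essentially have to prove this same ``no substitution re-enters the prefix'' fact anyway, at which point the paper's contradiction argument is the shorter route.
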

\begin{proof}

We begin by showing that, for strings $X$ and $Y$, $LX = LY$ or $RX = RY$ implies $X = Y$. We will prove later that this is sufficient.

\textbf{Case 1.} Suppose that $LX=LY$. By definition, there exists a sequence of string substitutions that changes $LX$ into $LY$. We assume for simplicity that if we remove two substitutions from our sequence, the resulting sequence of substitutions does not change $LX$ to $LY$.

Note that if none of these string substitutions include the first $L$ of $LX$, then the same sequence can be used to change $X$ into $Y$. In this case, it is clear that $X = Y$, as desired.

Thus, we assume that the first $L$ in $LX$ is modified in a sequence of moves transforming $LX$ to $LY$. Note that at some point during our sequence of substitutions, $LX$ must begin with $LRLRLR\ldots LRLL$, so that we may substitute the first $L$. Say this is the left side of $t_{n}$ for some $n$. Performing the appropriate move yields $$\underbrace{LRLRLR \ldots LR}_\text{$n$ $LR$s}LL \ldots = RR\mathrlap{\underbrace{\phantom{LL\ldots LL}}_{\text{$2n-2$ $L$s}}}LL\ldots LLRL \ldots.$$ 

We must eventually change the first $R$ to an $L$ in order to obtain $LY$. Thus, we must eventually apply a substitution that changes the first part of the string, $RRLL \ldots LLRL$. Say we do this with relation $t_m$. There are two possibilities for this move:

\begin{enumerate}
    \item       $RR\mathrlap{\underbrace{\phantom{LL\ldots LL}}_{\text{$2n-2$ $L$s}}} LL\ldots L \mathrlap{\overbrace{\phantom{LRLRL \ldots LR}}^{\text{$m$ $LR$s}}}LRLRL \ldots LRLL \ldots = RR\mathrlap{\underbrace{\phantom{LL \ldots LL}}_{\text{$2n-3$ $L$s}}}LL \ldots LL RR\mathrlap{\underbrace{\phantom{LL \ldots LL}}_{\text{$2m-2$ $L$s}}}LL \ldots LLRL \ldots$
    \item $RR\mathrlap{\underbrace{\phantom{LL\ldots LL}}_{\text{$2n-2$ $L$s}}} LL\ldots LLR \mathrlap{\underbrace{\phantom{LRLRL \ldots LR}}_{\text{$m$ $LR$s}}}LRL \ldots LRLL \ldots = RR\mathrlap{\underbrace{\phantom{LL \ldots LL}}_{\text{$2n-2$ $L$s}}}LL \ldots LL RRR\mathrlap{\underbrace{\phantom{LL \ldots LL}}_{\text{$2m-2$ $L$s}}}LL \ldots LLRL \ldots$
\end{enumerate}

Note, in the former instance, that the resulting string has $2n-1$ $L$s (an odd number) in between the first $RR$, and the following $R$. However, the number of consecutive $L$s after the first $RR$ in every $t_i$ is even. Furthermore, in the second instance, the resulting string has three consecutive $R$s in the middle. Thus, we cannot apply any substitutions involving the first segment of these strings, and our subsequent substitutions must be applied to the segment of each string beginning with $RRLL\ldots LRL$. However, this is precisely the situation that we were in before performing a substitution. Thus, because every substitution we perform will consist of letters to the right of the previous substitution, we will never change the first $R$ back to an $L$. Hence, if $LX = LY$, we must have $X = Y$, as desired.

We consider the second case.

\textbf{Case 2.} Let $RX$=$RY$. As in the first case, we attempt a sequence of substitutions that change $RX$ into $RY$. Again, we must eventually perform a move changing the first $R$ to an $L$. Thus, we have $$RR\mathrlap{\underbrace{\phantom{LL \ldots LL}}_\text{$2n-2$ $L$s}}LL \ldots LL RL \ldots = \mathrlap{\underbrace{\phantom{LRLR \ldots LR}}_{\text{$n$ $LR$s}}}LRLR \ldots LR LL \ldots.$$ In order to change the first $L$ back to an $R$ without directly undoing our first substitution, we must eventually perform a substitution that modifies some of the letters that we changed in our first substitution. We have only one way to do this: 
$$\mathrlap{\underbrace{\phantom{LRLR \ldots LR}}_{\text{$n$ $LR$s}}}LRLR \ldots LR L\mathrlap{\underbrace{\phantom{LRLR \ldots LR}}_\text{$m$ $LR$s}}LRLR \ldots LR LL \ldots  = \mathrlap{\underbrace{\phantom{LRLR \ldots LR}}_{\text{$n$ $LR$s}}}LRLR \ldots LR R\mathrlap{\underbrace{\phantom{LL \ldots LL}}_\text{$2m-2$ $L$s}}LL \ldots LL RL \ldots.$$ 
However, once again, no substitutions can  be performed to the beginning of the string, $LRLRLR \ldots LRR$. So, this is equivalent to the case in which our string began with $RRLL \ldots LLRL$. Thus, similarly to Case $1$, $RX = RY$ implies $X = Y$, as desired.

Thus, we have proved that $LX = LY$ or $RX = RY$ implies $X = Y$. Now let $A = a_1 a_2 a_3  \ldots  a_n$ where $a_i \in \{R, L\} \text{ } \forall \text{ } 0<i\leq n$, and consider the equality $$a_1 a_2 a_3  \ldots  a_n X = a_1 a_2 a_3  \ldots  a_n Y.$$ Note that, for all $1 \leq i \leq n$, $$a_i a_{i+1} a_{i+2} \ldots a_n X = a_i a_{i+1} a_{i+2} \ldots a_n Y \implies a_{i+1} a_{i+2} a_{i+3} \ldots a_n X = a_{i+1} a_{i+2} a_{i+3} \ldots a_n Y.$$ This eventually implies $X=Y$, as desired.
\end{proof}

In order to prove that at least one of the posets defined by the alphabet $\{R, L\}$ and the relations in an element of $\mathcal{P}(S)$ (the power set of $S$, where $S$ is the set of relations defined above) has uncomputable rank-generating function, we consider the following lemma.

\begin{lemma} \label{irrationality distinct rgf}
Any two posets defined by different elements of $\mathcal{P}(S)$ have different rank-generating functions. 
\end{lemma}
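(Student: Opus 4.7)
The plan is to pinpoint a rank at which the rank-generating functions must disagree. Let $n$ be the smallest index for which $t_n$ lies in exactly one of $T_1$ and $T_2$, and suppose without loss of generality that $t_n \in T_1 \setminus T_2$; write $P_i$ for the upho poset determined by the relations in $T_i$. I claim that $P_1$ and $P_2$ agree in cardinality at every rank through $2n+1$ and differ at rank $2n+2$.

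For ranks $k \leq 2n+1$, any substitution using $t_m$ requires the current string to have length at least $2m+2$, so only relations $t_m$ with $m \leq n-1$ can act; these coincide in $T_1$ and $T_2$ by the minimality of $n$, so $|(P_1)_k| = |(P_2)_k|$. At rank $2n+2$, relations $t_m$ with $m > n$ remain too long to apply, hence the equivalences in $P_1$ and $P_2$ differ only by the inclusion of $t_n$ in $P_1$. Because $t_n$ has length exactly $2n+2$, its sole action on a length-$2n+2$ string is to identify the two sides $A_n := (LR)^n LL$ and $B_n := RR L^{2n-2} RL$, so it suffices to show that $A_n$ and $B_n$ already represent distinct elements of $P_2$.

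To prove this separation I will compute the $P_2$-equivalence class of $A_n$ explicitly. The word $A_n$ contains no adjacent $R$'s, and has its only pair of adjacent $L$'s at positions $2n+1, 2n+2$; so for every $m' \leq n-1$ the left side $(LR)^{m'} LL$ of $t_{m'}$ occurs in $A_n$ only at starting position $2n+1 - 2m'$, while the right side $RR L^{2m'-2} RL$ does not occur at all. The only substitutions available from $A_n$ are therefore applications of $t_{m'} \in T_2$ (with $2 \leq m' \leq n-1$) at position $2n+1-2m'$, producing the string $A_n^{(m')} := (LR)^{n-m'} RR L^{2m'-2} RL$.

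The step I expect to be the main obstacle is the second-round analysis: from each $A_n^{(m')}$ the only substitution available must be the one undoing the preceding move back to $A_n$. The unique $RR$ pair in $A_n^{(m')}$ sits at positions $2(n-m')+1, 2(n-m')+2$, so any right-side match $RR L^{2m''-2} RL$ must start there, and a length check shows that only $m'' = m'$ yields a valid $RL$ tail at positions $2n+1, 2n+2$, giving exactly the undoing move. An alleged left-side match $(LR)^{m''} LL$ would either have its alternating pattern straddle position $2(n-m')+1$ (an $R$, contradicting the required $L$ at that parity) or lie entirely within the block of consecutive $L$'s (where no $R$ is available for the pattern's $R$'s); this parallels the parity and three-consecutive-$R$ obstructions of Cases~1 and~2 in the proof of Lemma~\ref{irrational-upho}. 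Once these case checks are complete, the $P_2$-class of $A_n$ equals $\{A_n\} \cup \{A_n^{(m')} : t_{m'} \in T_2,\ 2 \leq m' \leq n-1\}$, every member of which begins with $L$. Hence $B_n$ lies outside this class, and adjoining $t_n$ to pass to $P_1$ strictly merges the classes of $A_n$ and $B_n$, reducing the rank-$2n+2$ count by exactly one; so the two rank-generating functions differ.
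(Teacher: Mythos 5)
Your proof is correct, and it follows the same skeleton as the paper's: pick the minimal relation $t_n$ in the symmetric difference, note that only the (shared) shorter relations can act on words of length at most $2n+2$, and show that the two sides of $t_n$ are not already identified without it, so the counts at rank $2n+2$ differ by one. Where you diverge is in how that last separation is established: the paper simply observes that no substitution whatsoever applies to the right side $RRL^{2n-2}RL$ (no left side $(LR)^{m'}LL$ fits, and no shorter right side fits after the initial $RR$), so its class is a singleton; you instead compute the entire equivalence class of the left side $(LR)^nLL$ and check every member begins with $L$. Both work, but the paper's choice of side makes the argument a one-liner, while yours requires the two-round closure analysis you correctly identify as the main obstacle. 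One small imprecision: for $m'\leq n-1$ the word $A_n^{(m')}=(LR)^{n-m'}RRL^{2m'-2}RL$ contains a run of \emph{three} consecutive $R$'s, hence two overlapping $RR$ pairs, not a unique one; this does not damage the argument, since a right-side match starting at the earlier pair would need $RRL$ where the word has $RRR$, but the claim as stated should be corrected.
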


\begin{proof}
We first show that $t_n$ is not implied by any of the previous for all $n \geq 2$. Notice that there is no substring $LR$ of $RRLL \ldots LLRL$, so any string substitution we perform must begin with $RR$.

However, the only substring starting with $RR$ that we can perform would be to substitute the entire string. Thus, none of the elements of $S$ can be replaced by a sequence of smaller string substitutions, as desired.

Now, consider two posets $P$ and $Q$ defined by distinct elements of $\mathcal{P}(S)$, called $p$ and $q$ respectively. We consider the shortest relation in $S$ that is contained in exactly one of $p$ and $q$, and let the length of the relation be $l$. Then, the number of elements on rank $l$ of $P$ and $Q$ differs by exactly one, leading to the two having different rank-generating functions, as desired.
\end{proof}

Finally, we are ready to prove Theorem~\ref{thm:intro-irrationality}.

\begin{proof}[Proof of Theorem~\ref{thm:intro-irrationality}]

Note that $\mathbb{Z}[x]$ is countably infinite, which implies that $\mathbb{Z}[x] \times \mathbb{Z}[x]$ is countably infinite. The latter quantity is equivalent to the number of rational functions where both the numerator and denominator have integer coefficients. However, $|\mathcal{P}(S)|$ is uncountable for an infinite set $S$. By Lemma \ref{irrational-upho}, every element of $\mathcal{P}(S)$ corresponds to an upho poset, and by Lemma \ref{irrationality distinct rgf}, no two have the same rank-generating function. Thus, at least one element of $\mathcal{P}(S)$ must have an uncomputable rank-generating function, as desired.
\end{proof}

\section{Acknowledgments}

We would like to thank Richard Stanley for proposing this problem and for insightful conversations. We would also like to thank the MIT PRIMES program, under which this research was conducted.

\bibliographystyle{plain}

\end{document}